\setlist{itemsep=0pt, topsep=0pt}
\newcommand{\floor}[1]{\lfloor#1\rfloor}
\newcommand{\ceiling}[1]{\lceil#1\rceil}
\newcommand{\RR}{\mathbb{R}}
\newcommand{\PP}{\mathbb{P}}
\newcommand{\EE}{\mathbb{E}}
\newcommand{\tbf}[1]{\textbf{#1}}
\newtheorem{theorem}{Theorem}[section]
\newtheorem{corollary}[theorem]{Corollary}
\newtheorem{lemma}[theorem]{Lemma}
\newtheorem{proposition}[theorem]{Proposition}
\newtheorem{observation}[theorem]{Observation}
\newtheorem{problem}[theorem]{Problem}
\newcommand{\sqbs}[1]{\left[ #1 \right]}
\newcommand{\of}[1]{\left( #1 \right)}
\title{A note on the multicolor size-Ramsey numbers of connected graphs}
\author{Louis DeBiasio\thanks{Department of Mathematics, Miami University, Oxford, OH. \texttt{debiasld@miamioh.edu}. Research supported in part by NSF grant DMS-1954170.}}
\date{\today}
\begin{document}

\maketitle

\begin{abstract}
The $r$-color size-Ramsey number of a graph $H$, denoted by $\widehat{R}_r(H)$, is the minimum number of edges in a graph $G$ having the property that every $r$-coloring of the edges of $G$ contains a monochromatic copy of $H$. 

Krivelevich \cite{K1} proved that $\widehat{R}_r(P_{m+1})=\Omega(r^2m)$ where $P_{m+1}$ is the path on $m$ edges.  He explains that his proof actually applies to any connected graph $H$ with $m$ edges and vertex cover number larger than $\sqrt{m}$.  He also notes that some restriction on the vertex cover number is necessary since the star with $m$ edges, $K_{1,m}$, has vertex cover number 1 and satisfies $\widehat{R}_r(K_{1,m})=r(m-1)+1$.  We prove that the star is actually the only exception; that is, $\widehat{R}_r(H)=\Omega(r^2m)$ for every non-star connected graph $H$ with $m$ edges.

We also prove a strengthening of this result for trees.  It follows from results of Beck \cite{B2} and Dellamonica \cite{Del} that $\widehat{R}_2(T)=\Theta(\beta(T))$ for every tree $T$ with bipartition $\{V_1, V_2\}$ and $\beta(T)=|V_1|\max\{d(v):v\in V_1\}+|V_2|\max\{d(v):v\in V_2\}$.  We prove that $\widehat{R}_r(T)=\Omega(r^2\beta(T))$ for every tree $T$, again with the exception of the star.  Additionally, we prove that for the family of non-star trees $T$ with $\beta(T)=\Omega(n_1n_2)$ (which includes all non-star trees of linear maximum degree and all trees of radius 2 for example) we have $\widehat{R}_r(T)=\Theta(r^2\beta(T))$.
\end{abstract}

\section{Introduction}

The $r$-color size-Ramsey number of a graph $H$, denoted by $\widehat{R}_r(H)$, is the minimum number of edges in a graph $G$ having the property that every $r$-coloring of the edges of $G$ contains a monochromatic copy of $H$.  When $r=2$ we drop the subscript.

In his study of the 2-color size-Ramsey number of trees, Beck introduced \cite{B2} the following parameter $\beta(\cdot)$ (and conjectured that the 2-color size-Ramsey number of every tree $T$ is essentially determined by $\beta(T)$).  First, we call $H$ an $(n_1, n_2,\Delta_1,\Delta_2)$-bipartite graph if $H$ is a connected bipartite graph with unique bipartition $\{V_1, V_2\}$ with $|V_i|=n_i$ and $\Delta_i=\max\{d(v):v\in V_i\}$ for all $i\in [2]$.  Given an $(n_1, n_2,\Delta_1,\Delta_2)$-bipartite graph $H$ let $$\beta(H)=n_1\Delta_1+n_2\Delta_2.$$

It is known that for every tree $T$, 
\begin{equation*}\label{eq:beck}
\frac{\beta(T)}{4}\leq \widehat{R}(T)=O(\beta(T))
\end{equation*}
where the lower bound is due to Beck \cite{B2}, and the upper bound is due to 
Dellamonica \cite{Del}. In fact, Dellamonica's result \cite{Del} actually implies that for all $r\geq 2$,
\begin{equation}\label{eq:del}
\widehat{R}_r(T)=O_r(\beta(T)).  
\end{equation}
For further discussion regarding the dependence of the hidden constant on $r$ in \eqref{eq:del}, see Section \ref{sec:con}.

While the 2-color size-Ramsey number of trees has been studied extensively (see \cite{FP}, \cite{HK}, \cite{Ke} in addition to the results mentioned above), much less is known about the $r$-color size-Ramsey number of trees (aside from the special cases of paths and stars).  First note that for stars with $m$ edges, it is trivial to see that $\widehat{R}_r(K_{1,m})=r(m-1)+1$.

For the path on $m$ edges $P_{m+1}$, Krivelevich \cite{K1} proved that for all $r\geq 2$, $\widehat{R}_r(P_{m+1})=O(r^2(\log r) m)$ (the constant was later improved by Dudek and Pra{\l}at \cite{DP2} to give $\widehat{R}_r(P_{m+1})< 600r^2(\log r) m$).  However, with just a few modifications, Krivelevich's proof can be generalized to show (see Appendix) that for all $r, \Delta\geq 2$ there exists $m_0$ such that if $T$ is a tree with $m\geq m_0$ edges and maximum degree at most $\Delta$, then $\widehat{R}_r(T)= O(\Delta^3 r^2(\log (r\Delta))m$.

Regarding lower bounds, Dudek and Pra{\l}at \cite{DP1} proved that for all $r\geq 2$, $\widehat{R}_r(P_{m+1})\geq \frac{r^2}{4}m$.  Soon after, Krivelevich \cite{K1} gave a different proof based on affine planes which showed that $\widehat{R}_r(P_{m+1})\geq (r-2-o_r(1))^2m$ (later, Bal and the author \cite{BD1} slightly refined Krivelevich's proof to show that $\widehat{R}_r(P_{m+1})\geq (r-1-o_r(1))^2m$ and also gave yet another different proof to show that for all $r\geq 2$, $\widehat{R}_r(P_{m+1})\geq \frac{(r-1)r}{2}m$).  A noteworthy aspect of Krivelevich's result is that he explains how his proof actually applies to any connected graph $H$ with $m$ edges and vertex cover number $\tau(H)$ significantly larger than $\sqrt{m}$.  He also notes that some restriction on the vertex cover number is necessary since $\tau(K_{1,m})=1$ and $\widehat{R}_r(K_{1,m})=r(m-1)+1$.

This raises two questions: for which connected graphs $H$ with $m$ edges is it true that $\widehat{R}_r(H)=\Omega(r^2m)$, and for which trees $T$ is it true that $\widehat{R}_r(T)=\Omega(r^2\beta(T))$?  We answer both questions, showing that in both cases stars are the only exception. In fact, we answer the second question more generally in terms of connected bipartite graphs.

\begin{theorem}\label{thm1}
For all integers $r\geq 2$ there exists $m_0:=m_0(r)$ such that if $H$ is a connected graph with $m\geq m_0$ edges and $H$ is not a star, then $\widehat{R}_r(H)> \frac{r^2}{72}m$.
\end{theorem}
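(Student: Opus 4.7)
The plan is to argue by contradiction: suppose $G$ satisfies $e(G) < r^2m/64$ yet every $r$-coloring of $E(G)$ yields a monochromatic copy of $H$, and construct a good $r$-coloring instead. The argument splits based on the value of $\tau(H)$ relative to $\sqrt{m}$.

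\textbf{Case 1} ($\tau(H) \ge \sqrt{m}$). Here I apply Krivelevich's random-partition argument referenced in \cite{K1}. Fix $k = \lfloor \sqrt{2r}\rfloor$ so that $\binom{k+1}{2} \le r$. Uniformly randomly partition $V(G)=V_1\sqcup\cdots\sqcup V_k$, and color each edge $uv$ with $u\in V_i$, $v\in V_j$, $i\le j$, by a color determined by a fixed injection from $\{\{i,j\}: 1\le i\le j\le k\}$ into $[r]$. Each color class lies within one part or is bipartite between two parts, hence has vertex cover at most $\max_i |V_i|$. Standard concentration, together with $|V(G)|\le 2e(G)$ after discarding isolated vertices, yields $\max_i |V_i| < \sqrt{m} \le \tau(H)$ with positive probability, so no color class can contain $H$.

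\textbf{Case 2} ($\tau(H) < \sqrt{m}$). Then $\Delta(H) \ge m/\tau(H) > \sqrt{m}$. Let $v^*$ be a vertex of maximum degree $\Delta := \Delta(H)$. Since $H$ is connected and not a star, some neighbor of $v^*$ has degree $\ge 2$ in $H$ (else $H = K_{1,\Delta}$), so $H$ contains a broom $B_\Delta$, namely $K_{1,\Delta}$ with a pendant edge attached at one leaf. If $\Delta \ge m/2$, a direct lower bound of the form $\hat R_r(B_\Delta) \ge r^2\Delta/32$ gives $\hat R_r(H) \ge r^2 m/64$. If $\Delta < m/2$, then $\tau(H) \ge \lceil m/\Delta\rceil \ge 3$; summing cover-vertex degrees (totaling $\ge m - \binom{\tau(H)}{2} \ge m/2$) yields two cover-vertices of degree $\ge \sqrt{m}/3$, giving a double-broom (or, iteratively, a richer multi-broom) substructure to which an analogous direct analysis applies.

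The main obstacle is Case 2, specifically the direct lower bound $\hat R_r(B_\Delta) \ge r^2\Delta/32$: since $\tau(B_\Delta) = 2$, no Krivelevich-style partition argument will produce this. My plan is a tailored random edge coloring, bounding the expected number of monochromatic $B_\Delta$'s via a joint-event analysis --- a monochromatic $K_{1,\Delta}$ at $v$ \emph{and} a monochromatic pendant at one of its neighbors --- whose joint probability is roughly $r^{-\Delta}$ times a small pendant factor, and exploiting that $G$ has only $O(r)$ vertices of degree $\ge \Delta$ when $e(G) < r^2\Delta/32$, so a union bound succeeds. Extending this to the sub-case $\Delta < m/2$ requires leveraging the double-broom / multi-broom structure to recover the full $r^2m$ scaling, which is the more delicate step.
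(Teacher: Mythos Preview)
Your Case~1 contains a fatal arithmetic error. After discarding isolated vertices you have $|V(G)|\le 2e(G)<r^2m/32$, so a uniformly random partition into $k=\lfloor\sqrt{2r}\rfloor$ parts produces parts of expected size roughly $r^{3/2}m/(32\sqrt{2})$, which is vastly larger than $\sqrt{m}$ once $m$ is large. Hence the claim $\max_i|V_i|<\sqrt{m}$ is simply false, and the vertex-cover argument collapses. Krivelevich's actual argument (and the paper's Proposition~\ref{prop:chi3}) is more delicate: one first strips off the vertices of degree exceeding $r\sqrt{m}$, then partitions the remainder into $q^2\approx r^2$ parts and uses the affine-plane structure so that each of the $q+1$ colors corresponds to a parallel class of lines; the key estimate (via McDiarmid, exploiting the degree cap) is that each \emph{line} carries fewer than $m$ \emph{edges}, not that parts have few vertices. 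Moreover, that argument needs $\chi(H)\ge 3$ to dispose of the bipartite cut between high- and low-degree vertices with a single color, which is why the paper treats bipartite $H$ by a separate method.

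Your Case~2 is also incomplete. The broom bound $\hat R_r(B_\Delta)\ge r^2\Delta/32$ is true (since $B_\Delta=S_{\Delta-1,1}$, so Proposition~\ref{prop:weakbip} or Theorem~\ref{thm3} gives it directly via a Vizing-type coloring), but your proposed first-moment argument under a uniformly random coloring will not yield it: the number of vertices of degree $\ge\Delta$ in $G$ is only bounded by $2e(G)/\Delta<r^2/16$, not $O(r)$ as you assert, and if such a vertex has degree, say, $2r\Delta$ then the probability it anchors a monochromatic $K_{1,\Delta}$ is $\Omega(1)$, so the union bound fails. Finally, the subcase $\Delta<m/2$ is left as a sketch; merely locating two cover-vertices of degree $\ge\sqrt{m}/3$ does not produce a subgraph of $H$ with $\hat R_r\ge r^2m/64$, since those vertices need not be adjacent and the size-Ramsey number of two stars joined by a path of unknown length is not obviously of the required order.

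For comparison, the paper bypasses the $\tau(H)$ dichotomy entirely, splitting instead on $\chi(H)$. For $\chi(H)\ge3$ it runs the affine-plane/McDiarmid argument above; for bipartite non-star $H$ it uses Lemma~\ref{lem:viz} to color edges at low-degree vertices so that each has monochromatic degree at most $\Delta_2-1$, and then observes that the set of remaining high-degree vertices is smaller than $R_r(H)$ by Lemma~\ref{lem:affine}, giving $\hat R_r(H)\ge (r^2/32)(\Delta_2-1)(n_1+n_2)\ge r^2m/64$ in one stroke.
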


\begin{theorem}\label{thm2}
For all integers $r\geq 6$, there exists $n_0:=n_0(r)$ such that if $H$ is a connected bipartite graph on $n\geq n_0$ vertices and $H$ is not a star, then $\widehat{R}_r(H)\geq \frac{r^2}{2304}\beta(H)$.
\end{theorem}

We remark that it should be possible to modify our proof of Theorem \ref{thm2} to remove the restriction that $r\geq 6$ at the expense of having an absolute constant smaller than $\frac{1}{2304}$, and (more importantly) an additional step in the proof to separately deal with the case when $2\leq r\leq 5$.  However, since we are only interested in the long term behavior in terms of $r$, we choose to keep the proof as simple as possible.  Also we make no serious attempt to optimize the values of $m_0$, $n_0$ or the absolute constants $\frac{1}{72}$, $\frac{1}{2304}$ appearing in the lower bounds, but we point out that $m_0$ and $n_0$ are polynomial in $r$.

The double star $S_{n,m}$ is the tree on $n+m+2$ vertices obtained by joining the centers of $K_{1,n}$ and $K_{1,m}$.  We determine the correct order of magnitude (in terms of $r$) of $\widehat{R}_r(S_{n,m})$.

\begin{theorem}\label{thm3} For all integers $r\geq 2$ and $n\geq m\geq 1$ such that $n+m+2\geq \frac{(\ceiling{r/2}+1)^2}{2}$, we have
\begin{equation*}
\begin{rcases}
  \frac{1}{2}(m+1)(n+m+2)  &r=2 \\
  \frac{r^2-1}{16}m(n+m+2)  &r\geq 3
\end{rcases} 
\leq \widehat{R}_r(S_{n,m})\leq 4r^2nm+2r(n+m)+1.
\end{equation*}
\end{theorem}

In the process of proving the above result, we realized that we were able to determine the correct order of magnitude (in terms of $r$) of the $r$-color size-Ramsey numbers of a much larger family of trees.  Note that given an $(n_1, n_2, \Delta_1, \Delta_2)$-bipartite graph $H$, the largest possible value of $\beta(H)=\Delta_1n_1+\Delta_2n_2$ is $2n_1n_2$ which is achieved by $S_{n_1-1, n_2-1}$ as well as $K_{n_1, n_2}$ for instance.  Given $0<\alpha\leq 2$, we say that a connected bipartite graph $H$ is \emph{$\alpha$-full} if $\beta(H)\geq \alpha n_1n_2$.  Note that all $(n_1, n_2, \Delta_1, \Delta_2)$-trees (or more generally $(n_1, n_2, \Delta_1, \Delta_2)$-bipartite graphs) of radius 2 -- which includes non-trivial double stars -- are $\alpha$-full for some $\alpha\geq 1$ (since there is necessarily a vertex in the part of size $n_i$ which is adjacent to every vertex in the part of size $n_{3-i}$ for some $i\in [2]$).  Also all $(n_1, n_2, \Delta_1, \Delta_2)$-trees (or more generally $(n_1, n_2, \Delta_1, \Delta_2)$-bipartite graphs) with maximum degree $\alpha(n_1+n_2)$ are $\alpha'$-full for some $\alpha'\geq \alpha$.

\begin{theorem}\label{thm4}
For all integers $r\geq 6$ there exists $n_0$ such that for all $0<\alpha\leq 2$, if $T$ is an $\alpha$-full $(n_1, n_2, \Delta_1, \Delta_2)$-tree with $n_1+n_2\geq n_0$ and $T$ is not a star, then $$\frac{\alpha r^2}{2304} n_1n_2 \leq \widehat{R}_r(T)\leq 4r^2 n_1n_2+4r(n_1+n_2)+1;$$ i.e.~$\widehat{R}_r(T)=\Theta(r^2\beta(T))$.
\end{theorem}

\section{Notation and Preliminary material}

\subsection{Notation}

Given a graph $G$ and a set $S\subseteq V(G)$, let $\Delta(S)=\max\{d(v): v\in S\}$ and let $G[S]$ be the subgraph induced by $S$.  We write $e(G)$ to mean $|E(G)|$.  Given disjoint non-empty sets $X,Y\subseteq V(G)$, we let $G[X,Y]$ be the bipartite graph on $X\cup Y$ induced by the edges between $X$ and $Y$.  For a subgraph $G'\subseteq G$, a set $S\subseteq V(G')$, and a vertex $v\in V(G')$, we write $N_{G'}(v)=\{u: \{u,v\}\in E(G')\}$, $N_{G'}(S)=\bigcup_{v\in S}N_{G'}(v)$, and $d_{G'}(v,S)=|N_{G'}(v)\cap S|$.  If $G'=G$, we drop the subscripts.  

The \emph{vertex cover number} of a graph $G$, denoted by $\tau(G)$, is the smallest positive integer $t$ such that there exists a set $T\subseteq V(G)$ with $|T|=t$ having the property that every edge is incident with a vertex in $T$.  Given a connected graph $G$ and vertices $u,v\in V(G)$, the distance between $u$ and $v$, denoted by $\mathrm{dist}(u,v)$, is the length of the shortest path between $u$ and $v$.  The \emph{radius} of a graph is defined as $\min_{u\in V(G)}\max_{v\in V(G)}\mathrm{dist}(u,v)$.  Note that a tree $T$ has radius 1 if and only if $T$ is a star.  If a tree $T$ has radius 2, then there exists $u\in V(T)$ such that for all $v\in V(T)$, the distance from $u$ to $v$ is at most 2 (and for every vertex $u\in V(T)$, there exists a vertex $v\in V(T)$ such that the distance from $u$ to $v$ is at least 2).

Given a positive integer $n$, we write $[n]=\{1,2,\dots, n\}$.  We write $\log$ to denote the natural logarithm.  We write $G(n,p)$ for the binomial random graph on $n$ vertices with edge probability $p$.  We use the standard $O(\cdot)$, $\Omega(\cdot)$, and $\Theta(\cdot)$ notation.  When the hidden constant term may depend on a variable $\ell$, we write $O_\ell(\cdot)$ for example.   

\subsection{Preliminary material}

Let $q$ be an integer with $q\geq 2$.  For our purposes, an \emph{affine plane} of order $q$ is a $q$-uniform hypergraph $A_q=(V, E)$ with $|V|=q^2$ and $|E|=q(q+1)$ having the property that for all distinct $u,v\in V$, there exists a unique $e\in E$ such that $\{u,v\}\subseteq e$ and that $E$ can be decomposed into $q+1$ many perfect matchings, called \emph{parallel classes}.  It is known that an affine plane of order $q$ exists whenever $q$ is a prime power (and it is an open problem to determine if there exists any affine plane of order $q$ where $q$ is not a prime power).  Using affine planes, it is well-known (see \cite{G11} for instance) that if $H$ is a connected graph on $n$ vertices, $r-1$ is a prime power, and $(r-1)^2$ divides $n-1$, then $R_r(H)\geq (r-1)(n-1)+1$ (where $R_r(H)$ is the ordinary $r$-color Ramsey number of $H$).  However, we would like to have a potentially weaker lower bound which holds for all $r\geq 2$ and sufficiently large $n$.  To do this, the idea is simply to use an affine plane corresponding to the largest prime power $q$ such that $q\leq r-1$.  However, to make everything quantitatively precise, we first state Bertrand's postulate (see \cite{HW} for instance) and then we describe the standard affine plane coloring suited to our particular application.  

\begin{theorem}\label{thm:bertrand}
For all integers $r\geq 3$, there exists a prime $q$ such that $\frac{r+1}{2}\leq q\leq r-1$.
\end{theorem}

\begin{lemma}[Affine plane coloring]\label{lem:affine}
For all integers $r\geq 2$ and $n\geq \frac{(r+1)^2}{2}$, if $H$ is a connected graph on $n$ vertices, then $R_r(H)\geq \frac{r}{2}n$.
\end{lemma}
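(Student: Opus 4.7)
The plan is to prove the lower bound by exhibiting an $r$-edge-coloring of $K_N$, for some $N\geq \frac{r}{2}n-1$, in which every monochromatic connected subgraph has strictly fewer than $n$ vertices. Since $H$ is connected on $n$ vertices, any monochromatic copy of $H$ must lie inside a single connected component of its color, so such a coloring gives $R_r(H)>N$, hence $R_r(H)\geq \frac{r}{2}n$.

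Let $q$ be the largest prime power with $q\leq r-1$. The two facts I will use are: (i) by Bertrand's postulate together with a finite verification for small $r$ (using powers of $2$ when needed), one has $q\geq \lceil r/2\rceil + 1$ for every $r\geq 4$ — the cases $r=2,3$ are trivial (take $K_{n-1}$ and the affine plane of order $2$, respectively); and (ii) the affine plane $\mathrm{AG}(2,q)$ has $q^2$ points and its $q(q+1)$ lines partition into $q+1$ parallel classes $\cP_1,\ldots,\cP_{q+1}$ of $q$ lines each, with any two distinct points lying on a unique line.

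Set $s=\lfloor (n-1)/q\rfloor$ and let $V=P\times[s]$, where $P$ is the point set of $\mathrm{AG}(2,q)$, so $N:=|V|=q^2s$. For each point $p\in P$, let $V_p=\{p\}\times[s]$ be the corresponding cluster. I define an $r$-coloring of $E(K_V)$ (using only the first $q+1\leq r$ colors) by: (a) color every intra-cluster edge with color $1$; (b) for $p\neq p'$ in $P$, color each edge between $V_p$ and $V_{p'}$ with the index $c\in\{1,\ldots,q+1\}$ such that the unique line through $p,p'$ lies in $\cP_c$.

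The heart of the verification is to observe that for every color $c$, the connected components of the color-$c$ subgraph are exactly the sets $\bigcup_{p\in L}V_p$ as $L$ ranges over $\cP_c$ (for $c=1$, the intra-cluster edges of each $V_p$ are absorbed into the component coming from the line of $\cP_1$ through $p$). Each such component has exactly $qs\leq n-1$ vertices, so no color class contains a connected subgraph on $n$ vertices. To conclude, I verify $N+1\geq \frac{r}{2}n$: from $qs\geq n-q$ we get $N=q\cdot qs\geq q(n-q)$, so the goal reduces to the inequality $(q-r/2)\,n\geq q^2-1$, which follows from $n\geq r^2$ and the bound on $q$. The main obstacle I expect is precisely this bookkeeping — guaranteeing a prime power $q$ in $[\lceil r/2\rceil+1,r-1]$ uniformly in $r\geq 4$, since Bertrand's postulate alone only produces a prime in $(r/2,r)$ which can be as small as $(r+1)/2$ when $r$ is odd; supplementing primes with powers of $2$ (and handling the smallest $r$ by hand) closes this gap and keeps the loss from the floor in $s$ within what $n\geq r^2$ can absorb.
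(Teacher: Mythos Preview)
Your proposal is correct and follows essentially the same construction as the paper: blow up the points of an affine plane of order $q$ (a prime power with roughly $r/2\leq q\leq r-1$) to clusters of size $\lfloor (n-1)/q\rfloor$ and color edges by parallel class, so that every monochromatic component has at most $q\lfloor (n-1)/q\rfloor\leq n-1$ vertices. One small simplification worth noting: the paper only asks for $q\geq (r+1)/2$, which follows directly from Bertrand's postulate applied with $m=\lfloor r/2\rfloor$, and this weaker bound already suffices for the final inequality $(q-r/2)\,n\geq q^2-1$ when $n\geq r^2$; your stronger requirement $q\geq \lceil r/2\rceil+1$ is what forces the extra bookkeeping with powers of $2$ and separate treatment of $r=3$, and can be dropped.
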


\begin{proof}
When $r=2$, the statement is trivial; so suppose $r\geq 3$.  Since $r\geq 3$, there exists a prime, and thus a prime power, $q$ such that $\frac{r+1}{2}\leq q\leq r-1$ by Theorem \ref{thm:bertrand}.  Let $A_q$ be an affine plane of order $q$, with vertices $a_1, \dots, a_{q^2}$, and parallel classes $E_1, \dots, E_{q+1}$ where for all $i\in [q+1]$, $E_i$ consists of pairwise disjoint hyperedges $e^i_1, \dots, e^i_q$. 

Let $N=q^2\floor{\frac{n-1}{q}}$.  Now we use $A_q$ to describe a $(q+1)$-coloring of $K_N$ such that every monochromatic connected component has at most $n-1$ vertices.  Let $\{V_1, \dots, V_{q^2}\}$ be a partition of $V(K_N)$ such that for all $i\in [q^2]$, $|V_i|=\floor{\frac{n-1}{q}}$.  We color the edges inside the sets arbitrarily (for instance, for all $i\in [q^2]$, color every edge inside $V_i$ with color 1).  Now for distinct $i,j\in [q^2]$, we color every edge between $V_i$ and $V_j$ with color $k$ where $E_k$ is the unique parallel class which contains an edge $e^k_\ell$ such that $\{a_i, a_j\}\subseteq e^k_\ell$.  

Now for all $k\in [q+1]$, we have $q$ many pairwise disjoint connected components of color $k$, each having exactly $q\floor{\frac{n-1}{q}}\leq n-1$ vertices.  Since $q+1\leq r$, we have $R_r(H)>N\geq q^2\left(\frac{n-1-(q-1)}{q}\right)=q(n-q)\geq \frac{r}{2}n$ (where the last inequality holds since $\frac{r+1}{2}\leq q\leq r-1$ and $n\geq \frac{(r+1)^2}{2}$).
\end{proof}

The following lemma will be used a few times when determining a lower bound on the size-Ramsey number of bipartite graphs. 

\begin{lemma}\label{lem:viz}
Let $r,k\geq 1$, let $G$ be a graph, and let $X=\{v\in V(G): d(v)\leq rk-1\}$.  There exists an $r$-coloring of the edges incident with $X$ such that every vertex in $X$ is incident with at most $k$ edges of each color.  
\end{lemma}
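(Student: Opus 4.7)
The plan is to reduce the lemma to a standard application of Vizing's edge-coloring theorem after a preliminary vertex-splitting step. Let $G'$ be the spanning subgraph of $G$ consisting of those edges with at least one endpoint in $X$; then every $v\in X$ satisfies $d_{G'}(v)=d_G(v)\le rk-1$, whereas vertices outside $X$ may still have arbitrarily large degree in $G'$. To remedy this, I would build an auxiliary simple graph $G''$ by splitting each $u\notin X$ with $d_{G'}(u)>rk-1$ into $\lceil d_{G'}(u)/(rk-1)\rceil$ copies and distributing the edges incident with $u$ arbitrarily among the copies so that each copy receives at most $rk-1$ edges. Since $G'$ is simple, $G''$ is simple too, and by construction $\Delta(G'')\le rk-1$. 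Crucially, vertices in $X$ are never split, so each such $v$ retains the same incident edges in $G''$ as in $G'$.

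Next, I would apply Vizing's theorem to the simple graph $G''$ to obtain a proper edge-coloring with at most $\Delta(G'')+1\le rk$ colors. Label these colors $1,2,\ldots,rk$ and partition them into $r$ consecutive blocks of size $k$, namely $I_1=\{1,\ldots,k\}$, $I_2=\{k+1,\ldots,2k\}$, \ldots, $I_r=\{(r-1)k+1,\ldots,rk\}$. Identifying edges of $G''$ with edges of $G'$ in the obvious way, I define the desired $r$-coloring of the edges incident with $X$ by assigning each edge the index of the block containing its Vizing color.

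To verify correctness, fix any $v\in X$. Because $v$ was not split, its $d_{G'}(v)\le rk-1$ edges in $G''$ coincide with its edges in $G'$, and they receive pairwise distinct Vizing colors. Hence at most $|I_i|=k$ of these distinct colors can fall in any single block $I_i$, so $v$ has degree at most $k$ in every color of the induced $r$-coloring. The one potentially delicate point is the splitting step, but both the simplicity of $G''$ and the preservation of $X$-degrees are immediate from the fact that $G'$ is simple and that only vertices outside $X$ are split; all the actual work is then carried out by Vizing's theorem.
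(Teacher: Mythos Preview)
Your proof is correct and shares the same core idea as the paper's: obtain an auxiliary edge-coloring with $rk$ colors in which any two edges meeting at a vertex of $X$ receive distinct colors, then collapse the palette into $r$ blocks of size $k$. The execution differs only in how the edges between $X$ and $V(G)\setminus X$ are handled. The paper applies Vizing's theorem just to $G[X]$ and then, for each $v\in X$, greedily assigns to the edges from $v$ to $V(G)\setminus X$ colors not yet used at $v$ (there are $rk-d(v,X)$ such colors available and at most that many such edges). You instead build an auxiliary graph by splitting the high-degree vertices outside $X$ so that the entire edge set incident with $X$ has maximum degree at most $rk-1$, and then apply Vizing once to the whole thing. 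Your approach is a clean packaging that avoids the two-phase argument; the paper's version is marginally more elementary in that it needs no auxiliary construction. Either way, the essential content is Vizing plus block-grouping, and your verification that $X$-vertices are never split (so their incident edges keep pairwise distinct Vizing colors) is exactly the point that makes the argument go through.
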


\begin{proof}
By Vizing's theorem, we can color the edges in $G[X]$ with $rk$ many colors such that no two incident edges in $G[X]$ receive the same color.  For each vertex $v\in X$, we note that $d(v,V(G)\setminus X)\leq rk-1-d(v,X)< rk-d(v,X)$ and since there are $rk$ colors available and exactly $d(v,X)$ colors already used on edges incident with $v$, we can assign unused colors from $[rk]$ to the edges from $v$ to $V(G)\setminus X$.  Now we have a coloring of the edges incident with $X$ with $rk$ colors so that if two edges intersect in $X$, they receive different colors.  Now we partition $[rk]$ into $r$ many sets $A_1, \dots, A_r$ each of order $k$ and we recolor the edges incident with $X$ such that if an edge receives a color from the set $A_i$ we recolor it with $i$.  This gives us an $r$-coloring of the edges incident with $X$ such that every vertex in $X$ has degree at most $k$ in every color.  
\end{proof}

The following simple observation will be used often when determining a lower bound on the size-Ramsey number of bipartite graphs. 

\begin{observation}\label{obs:fit}
Let $H$ be an $(n_1,n_2,\Delta_1,\Delta_2)$-bipartite graph and let $G$ be a bipartite graph with bipartition $X,Y$.  If
\begin{enumerate}
\item $\min\{|X|, |Y|\}<\min\{n_1, n_2\}$ or $\max\{|X|, |Y|\}<\max\{n_1, n_2\}$, or
\item $\min\{\Delta(X), \Delta(Y)\}<\min\{\Delta_1, \Delta_2\}$ or $\max\{\Delta(X), \Delta(Y)\}<\max\{\Delta_1, \Delta_2\}$,
\end{enumerate}
then $H$ is not a subgraph of $G$.  
\end{observation}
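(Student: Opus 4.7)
The plan is to exploit the fact that $H$ is connected bipartite with a \emph{unique} bipartition $\{V_1, V_2\}$. Suppose $\varphi : V(H) \to V(G)$ is an embedding. Since $G$ is bipartite with parts $X, Y$, every edge of $\varphi(H)$ crosses the partition $\{X, Y\}$. Starting from any vertex $v \in V(H)$ and walking along any path (such paths exist because $H$ is connected), the images alternate between $X$ and $Y$, matching the alternation between $V_1$ and $V_2$ in $H$. So $\varphi(V_1)$ lies entirely in one of $\{X, Y\}$ and $\varphi(V_2)$ in the other. Up to swapping $X$ and $Y$, we may assume $\varphi(V_i) \subseteq X$ and $\varphi(V_{3-i}) \subseteq Y$ for some $i \in \{1, 2\}$.

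Second, I would read off the two contrapositives. For sizes, $|X| \geq |V_i| = n_i$ and $|Y| \geq n_{3-i}$, so as unordered pairs $\max\{|X|, |Y|\} \geq \max\{n_1, n_2\}$ and $\min\{|X|, |Y|\} \geq \min\{n_1, n_2\}$, which contradicts (i). For degrees, pick $v \in V_i$ with $d_H(v) = \Delta_i$; since $\varphi$ is an embedding, $d_G(\varphi(v)) \geq d_H(v) = \Delta_i$, and $\varphi(v) \in X$, so $\Delta(X) \geq \Delta_i$; similarly $\Delta(Y) \geq \Delta_{3-i}$, and the same unordered-pair comparison contradicts (ii).

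There is no serious obstacle; the only point meriting a moment of thought is the first step, namely that any embedding of a connected bipartite graph with a unique bipartition into a bipartite graph must preserve the bipartition up to a global swap, and this is immediate from the parity of path lengths in a bipartite graph.
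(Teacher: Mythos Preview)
Your argument is correct and is essentially the same as the paper's: both prove the contrapositive by noting that an embedding of the connected bipartite graph $H$ into $G$ must send $V_1$ into one part of $\{X,Y\}$ and $V_2$ into the other, from which the four inequalities follow. The paper simply asserts these inequalities in one line, whereas you spell out the parity/connectedness reason for the bipartition-preservation and the degree comparison, but there is no substantive difference in approach.
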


\begin{proof}
Let $V_1,V_2$ be the bipartition of $H$ such that for all $i\in [2]$, $|V_i|=n_i$.  If $H$ is a subgraph of $G$, then since $H$ is connected we must have $V_i\subseteq X$ and $V_{3-i}\subseteq Y$ for some $i\in [2]$.  Thus we have $\min\{|X|, |Y|\}\geq \min\{n_1, n_2\}$ and  $\max\{|X|, |Y|\}\geq \max\{n_1, n_2\}$ and $\min\{\Delta(X), \Delta(Y)\}\geq \min\{\Delta_1, \Delta_2\}$ and $\max\{\Delta(X), \Delta(Y)\}\geq \max\{\Delta_1, \Delta_2\}$.
\end{proof}

%

The following observation gives a simple characterization of which $(n_1, n_2,\Delta_1,\Delta_2)$-bipartite graphs are stars.

\begin{observation}\label{obs:star}
Let $H$ be an $(n_1, n_2,\Delta_1,\Delta_2)$-bipartite graph.  $H$ is a star if and only if $\Delta_1=1$ or $\Delta_2=1$ or $n_1=1$ or $n_2=1$.
\end{observation}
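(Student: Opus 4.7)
The plan is to prove the two directions of the biconditional separately. For the forward direction ($\Rightarrow$), if $H = K_{1,m}$ is a star, then its (necessarily unique) bipartition places the center alone on one side and the $m$ leaves on the other, so $\min\{n_1,n_2\}=1$ and the $\Delta$-parameter on the leaf side equals $1$, yielding the stated conclusion.

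For the reverse direction ($\Leftarrow$), by the symmetry between $V_1$ and $V_2$ it suffices to handle the two cases $\Delta_1=1$ and $n_1=1$. Suppose first that $\Delta_1 = 1$, so every vertex $v\in V_1$ satisfies $d(v)\leq 1$. Since $H$ is connected with at least one edge, every $v\in V_1$ is a leaf. As $V_2$ is independent (being the other part of the bipartition), any path between two distinct vertices of $V_2$ must pass through $V_1$; but each vertex of $V_1$ has degree at most $1$, so no path of length $\geq 2$ connects two distinct $V_2$-vertices. Combined with connectivity of $H$, this forces $|V_2|=1$, whence $H = K_{1,n_1}$ is a star centered at the unique vertex of $V_2$.

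Next suppose $n_1 = 1$, say $V_1=\{v\}$. Since $V_2$ is independent, any vertex of $V_2$ not adjacent to $v$ would be isolated in $H$, contradicting connectivity. Therefore $v$ is adjacent to every vertex of $V_2$, so $H = K_{1,n_2}$ is a star. The cases $\Delta_2 = 1$ and $n_2 = 1$ follow by swapping the roles of $V_1$ and $V_2$. The only subtlety — essentially not an obstacle — is to note that the hypothesis of a unique bipartition makes the labels $n_1,n_2,\Delta_1,\Delta_2$ well defined up to this swap, so the case analysis exhausts all possibilities.
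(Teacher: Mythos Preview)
Your proof is correct and follows essentially the same approach as the paper: both directions are handled identically, and for the reverse direction the key step is that if $\Delta_1=1$ then a path connecting two distinct vertices of $V_2$ would force a vertex of $V_1$ to have degree at least $2$, so $|V_2|=1$. Your treatment is slightly more thorough in that you explicitly dispose of the case $n_1=1$ (which the paper leaves implicit, since $n_1=1$ forces $\Delta_2\le n_1=1$ anyway), but the core argument is the same.
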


\begin{proof}
If $H$ is a star, then clearly $n_1=1$ and $\Delta_2=1$, or $n_2=1$ and $\Delta_1=1$.  

If $n_1=1$ or $n_2=1$, then clearly $H$ is a star.  Now without loss of generality, suppose $\Delta_1=1$ and suppose for contradiction that $H$ is not a star, which by the previous sentence implies that $n_2\geq 2$.  However, since $H$ is connected, we have a path connecting distinct vertices from $V_2$ which implies that there is a vertex of degree at least 2 in $V_1$, contradicting our assumption that $\Delta_1=1$.  
\end{proof}

Beck \cite{B2} proved the following lower bound on the size-Ramsey number of trees.  While it is not stated in this way, Beck's proof actually applies to all connected bipartite graphs.  Since we will apply his result in this more general form, we give the proof below. 

\begin{proposition}\label{prop:beck}
For every connected bipartite graph $H$, $\widehat{R}(H)\geq \frac{\beta(H)}{4}$.
\end{proposition}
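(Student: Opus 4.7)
The plan is to prove the contrapositive: every graph $G$ with $e(G) < \beta(H)/4$ admits a $2$-edge-coloring with no monochromatic copy of $H$. By swapping $V_1$ and $V_2$ if necessary, I may assume $n_1\Delta_1 \leq n_2\Delta_2$, so $\beta(H)\leq 2n_2\Delta_2$ and therefore $2e(G) < n_2\Delta_2$. The key quantitative step is a double-counting bound on the set of ``high-degree'' vertices $Y := \{v \in V(G) : d_G(v) \geq \Delta_2\}$: from $|Y|\Delta_2 \leq \sum_{v\in Y}d_G(v) \leq 2e(G) < n_2\Delta_2$ we obtain $|Y| < n_2$.

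The $2$-coloring I would try is the cut coloring associated with the partition $\{Y, V(G)\setminus Y\}$: an edge is red if both endpoints lie on the same side, and blue otherwise. Then $G_{\text{red}} = G[Y]\cup G[V(G)\setminus Y]$ is a disjoint union, while $G_{\text{blue}}$ is bipartite with parts $Y$ and $V(G)\setminus Y$. For the red graph, since $H$ is connected, a monochromatic copy of $H$ would have to lie entirely in $G[Y]$, which is impossible because $n_1+n_2 > n_2 > |Y|$, or entirely in $G[V(G)\setminus Y]$, which is impossible because the maximum degree on $V(G)\setminus Y$ in $G$ is at most $\Delta_2-1$ whereas $H$ has a vertex of degree $\Delta_2$; both rule-outs are instances of Observation~\ref{obs:fit}.

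For the blue graph, since $H$ is connected bipartite with unique bipartition $\{V_1, V_2\}$, any copy of $H$ in $G_{\text{blue}}$ must send $\{V_1, V_2\}$ to the bipartition $\{Y, V(G)\setminus Y\}$ in one of two orientations. If $V_2$ maps into $Y$, then $n_2 \leq |Y|$, contradicting $|Y|<n_2$; otherwise $V_2$ maps into $V(G)\setminus Y$, and the image of the vertex of $V_2$ of degree $\Delta_2$ has degree at most $\Delta_2-1$ in $G_{\text{blue}}$, again a contradiction via Observation~\ref{obs:fit}. The main obstacle is essentially absent: the only subtlety is noticing that the natural partition of $V(G)$ by the single threshold $\Delta_2$ already makes the argument go through, so no recourse to an edge-coloring tool such as Lemma~\ref{lem:viz} is needed here.
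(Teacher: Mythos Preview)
Your proof is correct and follows essentially the same approach as the paper: threshold the vertices of $G$ at the degree $\Delta_i$ corresponding to the dominant term $n_i\Delta_i$, and use the cut coloring on the resulting partition. Your presentation is in fact a bit cleaner than the paper's, since by first deducing $|Y|<n_2$ from the edge count you handle both orientations of the embedding uniformly and avoid the paper's case split on whether $\Delta_1\le\Delta_2$ or $\Delta_1>\Delta_2$.
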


\begin{proof}

Let $H$ be an $(n_1, n_2,\Delta_1,\Delta_2)$-bipartite graph and set $n:=n_1+n_2$.  Suppose without loss of generality that $n_1\Delta_1\geq n_2\Delta_2$.  Let $G=(V,E)$ be a graph having the property that every 2-coloring of $G$ contains a monochromatic copy of $H$.  Let $X=\{v\in V(G): d(v)<\Delta_1\}$ and let $Y=V(G)\setminus X$.  Color all edges inside $X$ and inside $Y$ blue and all edges between $X$ and $Y$ red.  

\tbf{Case 1} ($\Delta_1\leq \Delta_2$): There can be no red copy of $H$ since every vertex in $X$ has degree less than $\Delta_1\leq \Delta_2$.  Likewise, there can be no blue copy of $H$ inside $X$.  So if there is a blue copy of $H$, it must be in $Y$, which implies $|Y|\geq n$.  Using this, together with fact that vertices in $Y$ have degree at least $\Delta_1$ and the assumption that $\Delta_1n_1\geq \Delta_2n_2$, we have  
\[e(G)\geq \frac{1}{2}|Y|\Delta_1\geq \frac{1}{2}n\Delta_1\geq \frac{1}{2}n_1\Delta_1\geq \frac{1}{4}( n_1\Delta_1+n_2\Delta_2)=\frac{\beta(H)}{4}.\]

\tbf{Case 2} ($\Delta_1>\Delta_2$):  Since every vertex in $X$ has blue degree less than $\Delta_1$, there can be no blue copy of $H$ inside $X$.  If there is a blue copy of $H$ inside $Y$, then $|Y|\geq n$ and since the vertices in $Y$ have degree at least $\Delta_1$ we have $$e(G)\geq \frac{1}{2}|Y|\Delta_1\geq \frac{1}{2}n\Delta_1>\frac{\beta(H)}{2}.$$  Finally, if there is a red copy of $H$, it must be the case that the part of size $n_1$ is embedded into $Y$ (since every vertex in $X$ has degree less than $\Delta_1$) and thus $|Y|\geq n_1$ which, together with the fact that vertices in $Y$ have degree at least $\Delta_1$ and the assumption that $\Delta_1n_1\geq \Delta_2n_2$, implies 
\[e(G)\geq \frac{1}{2}|Y|\Delta_1\geq \frac{1}{2}n_1\Delta_1\geq \frac{1}{4}(n_1\Delta_1+n_2\Delta_2)=\frac{\beta(H)}{4}.\qedhere\]
\end{proof}

We will use the following concentration inequality of McDiarmid \cite{Mc} (see \cite[Theorem 3.1]{Mc2}).  

\begin{lemma}[McDiarmid's inequality]\label{lem:mcd} Given a finite probability space, let $N$ be a positive integer, let $c_1, \dots, c_N$ be non-negative reals, let $A_1, \dots, A_n$ be subsets of $\RR$, and let $\mathbf{X}=(X_1, \dots, X_N)$ where for all $i\in [N]$, $X_i$ is a random variable with range $A_i$, and $X_1, \dots, X_N$ are mutually independent.  Let $Z:\prod_{i\in [N]}A_i\to \mathbb{R}$ such that for all $\mathbf{x}, \mathbf{x'}\in \mathbf{X}$, if $\mathbf{x}$ and $\mathbf{x'}$ differ only in coordinate $i$, then $|Z(\mathbf{x})-Z(\mathbf{x'})|\leq c_i$.  Then for all $t\geq 0$ we have
\[\mathbb{P}\sqbs{Z\ge \mathbb{E}[Z]  + t} \le \exp\Big(-\frac{2t^2}{\sum_{i\in [N]} c_i^2 }\Big).
\]
\end{lemma}

We will also use the following specific instance of Chernoff's inequality \cite{Ch} (see \cite[Theorem 2.1]{JLR}).

\begin{lemma}[Chernoff's inequality]\label{lem:chernoff}
If $X$ is a random variable with binomial distribution, then for all $t\geq 0$,  $\PP(X\geq \EE[X]+t)\leq \exp\big(-\frac{t^2}{2(\EE[X]+t/3)}\big)$.
\end{lemma}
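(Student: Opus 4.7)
The plan is the classical moment generating function argument. Writing $X = \sum_{i=1}^{n} X_i$ as a sum of independent Bernoulli$(p)$ indicators (so $\mu := \EE[X] = np$), I would apply Markov's inequality to $e^{tX}$ for an arbitrary $t > 0$:
$$\PP(X \geq 2\mu) = \PP(e^{tX} \geq e^{2t\mu}) \leq e^{-2t\mu}\,\EE[e^{tX}].$$
Independence of the $X_i$ gives $\EE[e^{tX}] = (1 + p(e^t - 1))^n$, and the elementary inequality $1+x \leq e^x$ upgrades this to $\EE[e^{tX}] \leq e^{\mu(e^t - 1)}$. Combining these bounds produces the one-parameter estimate
$$\PP(X \geq 2\mu) \leq \exp\bigl(\mu(e^t - 1 - 2t)\bigr),$$
valid for every $t > 0$.

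Next I would optimize the exponent by setting the derivative $e^t - 2$ to zero, which yields the minimizer $t = \ln 2$ and value $1 - 2\ln 2$. Substituting back gives
$$\PP(X \geq 2\mu) \leq e^{\mu(1 - 2\ln 2)} = \left(\frac{e}{4}\right)^{\mu}.$$

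To conclude, I would verify the numerical inequality $1 - 2\ln 2 \leq -1/3$, equivalently $\ln 4 \geq 4/3$, which holds since $\ln 4 \approx 1.386 > 1.333 \approx 4/3$. This gives $(e/4)^{\mu} \leq e^{-\mu/3}$, as claimed.

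There is no serious obstacle: the argument is a textbook computation, and the only nontrivial choice is $t = \ln 2$, which falls out of elementary one-variable optimization. One could alternatively invoke the standard Chernoff bound $\PP(X \geq (1+\delta)\mu) \leq e^{-\mu \delta^2/3}$ for $\delta \in (0,1]$ and specialize to $\delta = 1$, but the direct derivation above is only a few lines and keeps the proof self-contained.
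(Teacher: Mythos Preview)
Your derivation is correct and is the standard Chernoff/Cram\'er argument; the paper itself states this lemma without proof as a well-known concentration inequality, so there is nothing to compare against.
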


\section{A lower bound on the size-Ramsey number of connected graphs with $m$ edges}

In this section we prove Theorem \ref{thm1}.  We split the proof into two cases depending on whether $H$ is bipartite or not.

\subsection{Non-bipartite case}

\begin{proposition}\label{prop:chi3}
For all $r\geq 2$, there exists $m_0:=m_0(r)$ such that if $H$ is a connected graph with $m\geq m_0$ edges and chromatic number at least 3, then $\widehat{R}_r(H)\geq \frac{r^2}{36} m$.
\end{proposition}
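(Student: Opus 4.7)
The plan is to show that any graph $G$ with $e(G) \leq \frac{r^2 m}{64}$ admits an $r$-edge-coloring with no monochromatic copy of $H$, which proves $\hat{R}_r(H) > \frac{r^2 m}{64}$. Since $\chi(H) \geq 3$, every bipartite subgraph of $G$ automatically fails to contain $H$, so the strategy is to produce $r-1$ bipartite color classes together with one ``internal'' color class in which $H$ still cannot appear.

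First I would randomly vertex-color $G$. Set $k = \ceiling{r^2/8}$ and note that $k \leq 2^{r-1}$ for every $r \geq 6$. Color $V(G)$ independently and uniformly at random with colors in $[k]$, producing a partition $V_1 \cup \cdots \cup V_k$. Each edge has both endpoints of the same color $i$ with probability $1/k^2$, so $\EE[e(G[V_i])] = e(G)/k^2 \leq m/r^2$. By Markov's inequality and a union bound,
\[ \PP\bigl[\exists\, i \in [k] : e(G[V_i]) \geq m\bigr] \leq \frac{k}{r^2} \leq \frac{1}{8} + \frac{1}{r^2} < 1, \]
so I may fix a vertex coloring in which every $G[V_i]$ has strictly fewer than $m$ edges.

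Next I would convert this into an edge coloring via a binary labeling of the vertex colors. Because $k \leq 2^{r-1}$, assign distinct labels $b_1, \ldots, b_k \in \{0, 1\}^{r-1}$. Color an edge with endpoints in distinct parts $V_i, V_j$ by $\varphi(i, j) \in [r-1]$, the smallest coordinate on which $b_i$ and $b_j$ differ; color every edge inside some single $V_i$ by $r$. For each $c \in [r-1]$, the color-$c$ class lies entirely between $\bigcup \{V_i : (b_i)_c = 0\}$ and $\bigcup \{V_i : (b_i)_c = 1\}$, so it is bipartite and cannot contain the non-bipartite graph $H$. Color class $r$ equals $\bigcup_{i \in [k]} G[V_i]$; each piece has fewer than $m = e(H)$ edges, and since $H$ is connected any potential copy of $H$ would lie entirely in one piece, which is impossible.

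The only delicate point is the simultaneous compatibility of the two constraints on $k$: we need $k \geq r^2/8$ so that the Markov step succeeds with probability strictly less than $1$ of failure, and $k \leq 2^{r-1}$ so that the binary labeling exists. Both are satisfied by $k = \ceiling{r^2/8}$ for every $r \geq 6$, and matching these is essentially the whole obstacle of the proof. The hypothesis $m \geq m_0(r)$ in the statement is not actually needed for the non-bipartite case via this Markov-based approach; it is inherited from Theorem~\ref{thm1} and is needed in the bipartite case handled separately.
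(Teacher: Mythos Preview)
Your proof is correct and takes a genuinely different route from the paper. The paper first isolates a small set $V_0$ of high-degree vertices (handled via Lemma~\ref{lem:affine} and one bipartite color), then randomly partitions $V\setminus V_0$ according to the points of an affine plane $A_q$ with $q\approx r$, and uses McDiarmid's inequality to show that every \emph{line} spans fewer than $m$ edges; the $q+1$ parallel classes then give the remaining colors. Your argument instead partitions into $k\approx r^2/8$ parts and uses binary labels of length $r-1$: each of the $r-1$ ``crossing'' color classes is bipartite and therefore automatically $H$-free since $\chi(H)\ge 3$, so only the internal color class needs any edge-count control, and there a simple Markov bound on each individual $G[V_i]$ suffices. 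In effect, you lean on the hypothesis $\chi(H)\ge 3$ for $r-1$ of the color classes, whereas the paper uses it only once and must therefore control edge counts along whole lines, which forces concentration rather than a first-moment bound. Your approach is more elementary, avoids both the affine-plane machinery and McDiarmid, and as you note it does not require $m$ to be large, so it actually yields the proposition without any $m_0$ hypothesis.
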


\begin{proof}
Let $G=(V,E)$ be a graph with $|E|< \frac{1}{4}r^2 m$.  We first show that for all $r\geq 2$, $G$ can be colored with $2r$ colors such that there is no monochromatic copy of $H$ (then we will apply this with $\floor{r/2}$ in place of $r$ to get the desired result).

Let $V_0:=\{v\in V(G):d(v) > r\sqrt{m}\}$. Then $\frac{1}{2}|V_0|r\sqrt{m} < |E| < \frac{1}{4}r^2 m $ implies that $|V_0| < \frac{r}{2}\sqrt{m}$.  Also note that $m\leq  \binom{|V(H)|}{2}\leq \frac{|V(H)|^2}{2}$ and thus $|V(H)|> \sqrt{2m}$.  Note that since $|V_0|<\frac{r}{2}\sqrt{m}<\frac{r}{2}|V(H)|$,  Lemma \ref{lem:affine} implies that we can color the edges inside $V_0$ with colors from $[r]$ such that there is no monochromatic copy of $H$ in $V_0$.  We use color $2r$ for all of the edges between $V_0$ and $V\setminus V_0$ and we note that since $H$ has chromatic number at least 3, there is no monochromatic copy of $H$ between $V_0$ and $V\setminus V_0$.  We now show how to color the edges inside $V\setminus V_0$ with colors from $[2r-1]$ so there is no monochromatic copy of $H$.  Note that while we are using colors from $[r]$ for edges inside $V_0$ and inside $V\setminus V_0$, there can be no monochromatic copy of $H$ which uses edges from both because $H$ is connected.  

Set $V'=V\setminus V_0$, $E'=E\cap \binom{V'}{2}$, $G'=(V',E')$, $N:=|V'|$, and let $v_1, \dots, v_N$ be an enumeration of $V'$.  Let $q$ be the smallest prime power such that $q\geq r\geq 2$ and note that by Theorem \ref{thm:bertrand} (with $r$ in place of $\frac{r+1}{2}$), we have $q\leq 2r-2$.  Assign the vertices in $V'$ independently and uniformly at random to the sets $V_1, \ldots, V_{q^2}$.  Let $L$ be a hyperedge of the affine plane $A_{q}$ on vertex set $[q^2]$ (where we recall that $|L|=q$), let $V_L=\bigcup_{i\in L}V_i$, and define the random variable $Z_L := e(G'[V_L])$.  The probability that $uv\in E'$ satisfies $uv\subseteq V_L$ is exactly $\left(\frac{|L|}{q^2}\right)^2=\frac{1}{q^2}$.  Thus we have  
\begin{equation}\label{eq:expect}
\mathbb{E}\sqbs{Z_L}=\frac{1}{q^2}|E'|< \frac{1}{q^2}\cdot\frac{1}{4}r^2 m \leq \frac{m}{4}.
\end{equation}

Note that by changing the assignment of the vertex $v_i$, we can change the value of $Z_L$ by at most $d_{G'}(v_i)$, so we will be in a position to apply Lemma \ref{lem:mcd} (McDiarmid) with $c_i=d_{G'}(v_i)$.  To say this a bit more formally, for all $i\in [N]$, let $X_i$ be a random variable which equals $\ell$ if and only if $v_i\in V_\ell$.  Note that if $\mathbf{x}$ and $\mathbf{x'}$ are two output vectors of $(X_1, \dots, X_N)$ which differ in exactly in the $i$th coordinate (that is $\mathbf{x}$ and $\mathbf{x'}$ correspond to two different assignments where the only difference is the location of $v_i$), then $|Z_L(\mathbf{x})-Z_L(\mathbf{x'})|\leq d_{G'}(v_i)$. 

In order to estimate the sum $\sum_{i\in [N]}c_i^2=\sum_{v\in V\setminus V_0}d_{G'}(v)^2$, first let $V_1=\{v\in V\setminus V_0: d_{G'}(v)> (r^2m)^{1/3}\}$ and $V_2=\{v\in V\setminus V_0: 0<d_{G'}(v)\leq (r^2m)^{1/3}\}$.  Note that $\frac{1}{2}(r^2m)^{1/3}|V_1|< |E| < \frac{1}{4}r^2 m$ and thus $|V_1|< \frac{1}{2}(r^2m)^{2/3}$.  Also we trivially have $|V_2|\leq 2|E|<\frac{1}{2}r^2m$.  So we have 
\begin{align}
\sum_{v\in V\setminus V_0}d_{G'}(v)^2=\sum_{v\in V_1}d_{G'}(v)^2+\sum_{v\in V_2}d_{G'}(v)^2 &\leq |V_1|(r\sqrt{m})^2+|V_2|((r^2m)^{1/3})^2\notag\\
&< \frac{1}{2}(r^2m)^{2/3}r^2m+\frac{1}{2}r^2m(r^2m)^{2/3}\notag\\
&=(r^2m)^{5/3} \label{eq:mcsum}
\end{align}

Now using Lemma \ref{lem:mcd} (McDiarmid) we have 
\[\mathbb{P}\sqbs{Z_L \ge m}\stackrel{\eqref{eq:expect}}{\leq} \mathbb{P}\sqbs{Z_L \ge \mathbb{E}\sqbs{Z_L}+\frac{3m}{4}} \stackrel{\eqref{eq:mcsum}}{\leq} \exp\of{-\frac{2(3m/4)^2}{(r^2m)^{5/3}}} = \exp\of{-\frac{9m^{1/3}}{8r^{10/3}}}< \frac{1}{q(q+1)},\]
where the last inequality holds since $m$ is sufficiently large in terms of $r$ and $r\leq q\leq 2r-2$.

Thus by taking a union bound over all $q(q+1)$ hyperedges of $A_{q}$, we conclude that there exists a partition of $V'$ such that at most $m - 1$ edges lie inside $V_L$ for all $L\in E(A_q)$.  Suppose $V_1, \ldots, V_{q^2}$ is such a partition.  Note that there are $q+1$ parallel classes and $q+1\leq 2r-1$.  These parallel classes will correspond to colors from $[q+1]\subseteq [2r-1]$.  For every edge $e\in E'$, we assign color $k$ to $e$ if the endpoints of $e$ are in distinct sets $V_i$ and $V_j$ where the unique hyperedge containing $i$ and $j$ in $A_{q}$ is in the $k$th parallel class of $A_{q}$.  We color $e$ with color 1 if both of its endpoints are in $V_i$ for some $i\in [q^2]$.  Note that there is no monochromatic copy of $H$ since each $V_L$ contains at most $m-1$ edges, and if $L$ and $L'$ are in the same parallel class $k$ of $A_q$, then the edges of color $k$ in $G'[V_L]$ are disconnected from the edges of color $k$ in $G'[V_{L'}]$.

Now we apply the above result with $\floor{\frac{r}{2}}$ in place of $r$ to get $\widehat{R}_r(H)\geq \frac{1}{4}\floor{\frac{r}{2}}^2 m\geq \frac{r^2}{36}m$, where we used the fact that $r\geq 2$ to get $\floor{\frac{r}{2}}\geq \frac{r}{3}$.  
\end{proof}

\subsection{Bipartite case}

Now we deal with the case where $H$ is an $(n_1, n_2,\Delta_1,\Delta_2)$-bipartite graph.  Note that the lower bound isn't explicitly written in terms of $m:=e(H)$, but as we will see in the next subsection, we can use this to derive a lower bound in terms of $m$. 

\begin{proposition}\label{prop:weakbip}
Let $H$ be an $(n_1, n_2,\Delta_1,\Delta_2)$-bipartite graph.  If $\Delta_1\geq \Delta_2\geq 2$, then for all $r\geq 2$, $\widehat{R}_r(H)\geq \frac{r^2}{36}(\Delta_2-1)(n_1+n_2)$.
\end{proposition}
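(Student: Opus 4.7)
The plan is to combine a Vizing-type edge coloring via Lemma \ref{lem:viz} with a random affine-plane partition in the style of Proposition \ref{prop:chi3}, splitting the $r$-color palette into two halves that handle a low-degree and a high-degree part of $G$ separately.

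Let $k = \Delta_2 - 1$ and $n = n_1 + n_2$; note $k \ge 1$ by Observation \ref{obs:star} and the hypothesis that $H$ is not a star. Given a graph $G$ with $e(G) < \frac{r^2}{32}kn$, let $r_1 = \lfloor r/2 \rfloor$, $r_2 = r - r_1$, and define $X = \{v \in V(G): d_G(v) \le r_1 k - 1\}$ and $Y = V(G) \setminus X$. Summing degrees yields $|Y| \le 2e(G)/(r_1 k) < rn/8$ for $r \ge 4$. First I would apply Lemma \ref{lem:viz} with parameters $(r_1, k)$ to color the edges incident with $X$ using the first $r_1$ colors, so that every $X$-vertex has degree at most $k = \Delta_2 - 1$ in each of these colors. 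Then I would let $q$ be the largest prime power with $q + 1 \le r_2$ (so $q = \Theta(r)$ by Bertrand's postulate), place each vertex of $Y$ independently and uniformly into one of $q^2$ parts, and color each edge inside $Y$ by $r_1 + i$, where $i$ indexes the parallel class of the affine plane $A_q$ whose line contains both endpoints' parts. By Lemma \ref{lem:chernoff} and a union bound over the $q(q+1)$ lines of $A_q$, with positive probability every line contains fewer than $n$ vertices, and I would fix such a partition.

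For the analysis I would split by color type. For color $i \in \{r_1 + 1, \ldots, r\}$ the graph $G_i$ consists only of edges inside $Y$ lying in lines of a single parallel class, so every connected subgraph of $G_i$ sits inside a line of fewer than $n$ vertices; because $H$ is connected on $n$ vertices this rules out $H \subseteq G_i$. For color $i \in \{1, \ldots, r_1\}$ the graph $G_i$ has no $Y$-$Y$ edges and every $X$-vertex has $G_i$-degree at most $k = \Delta_2 - 1$. If $\phi$ embedded $H$ into such a $G_i$, then $\phi^{-1}(X)$ would be a vertex cover of $H$ consisting only of vertices of $H$-degree at most $\Delta_2 - 1$, so the set $U = \{v \in V(H): d_H(v) \ge \Delta_2\}$ would lie entirely in $\phi^{-1}(Y)$ and be independent in $H$. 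I would derive the contradiction by combining the double-counting $\sum_{v \in U} d_H(v) \le e(G_i[X, Y]) \le k|X|$, the size bound $|X| \le 2e(G)$, and Observation \ref{obs:fit}(2) applied to the bipartite subgraph $G_i[X, Y]$, whose $X$-side max-degree is at most $k < \Delta_2$.

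The hard step is closing the Vizing-color case when $U$ is independent in $H$, since the forbidden $Y$-$Y$ edges of $G_i$ do not by themselves preclude an embedding. In that case the argument needs the structural fact that every $H$-neighbor of a $U$-vertex must be mapped to $X$ and have $H$-degree at most $\Delta_2 - 1$, which, combined with the degree double-counting and Observation \ref{obs:fit}(2), forces the required contradiction; the constant $\frac{r^2}{32}$ in the stated bound is exactly what survives this analysis after the $r/2$-$r/2$ split of the palette.
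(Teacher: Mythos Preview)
Your treatment of $G[Y]$ is far more elaborate than necessary and also breaks at the bottom of the range. The paper uses no random partition and no Chernoff here at all: once the degree threshold forces $|Y|<\tfrac{r}{2}(n_1+n_2)$, Lemma~\ref{lem:affine} gives $|Y|<R_r(H)$, so by the definition of the Ramsey number the remaining $r$ colours can be placed on $G[Y]$ with no monochromatic $H$. That is the entire $Y$-side argument. Your affine-plane route, besides needing a separate concentration step, fails at $r=4$, since then $r_2=2$ and there is no prime power $q$ with $q+1\le r_2$.

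The genuine gap is your ``hard step''. A Vizing colour class $G_i$ carries edges \emph{inside} $X$ as well as between $X$ and $Y$, and you must exclude copies of $H$ that use both kinds simultaneously. Observation~\ref{obs:fit} applied to the bipartite graph $G_i[X,Y]$ only rules out copies lying entirely in that bipartite piece, and your double-counting $\sum_{v\in U}d_H(v)\le k|X|\le 2k\,e(G)$ produces no contradiction. Concretely: let $H$ have $V_1=\{a,b_1,\dots,b_4\}$, $V_2=\{c,d_1,\dots,d_4\}$, with $a$ joined to every $d_j$, $c$ joined to every $b_j$, plus one extra edge $b_1d_1$; then $\Delta_1=\Delta_2=4$ and $k=3$. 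Take $Y=\{y_1,y_2\}$ independent, $X=\{x_1,\dots,x_8\}$, with edges $y_1x_1,\dots,y_1x_4$, $y_2x_5,\dots,y_2x_8$, and one $X$--$X$ edge $x_1x_5$. Every $X$-vertex has degree at most $2\le k$, yet $a\mapsto y_1$, $c\mapsto y_2$, $d_i\mapsto x_i$, $b_i\mapsto x_{4+i}$ embeds $H$ in $G_i$. Here $U=\{a,c\}$ is independent in $H$, every $H$-neighbour of $U$ lands in $X$ with $H$-degree at most $2\le\Delta_2-1$, and all of your stated inequalities are satisfied---so nothing in your outline forces the contradiction you promise. (The paper's one-line appeal to Observation~\ref{obs:fit} is equally terse on exactly this point; a clean way to close the gap is to spend \emph{separate} blocks of colours on $G[X]$ and on $G[X,Y]$, at the cost of a worse absolute constant.)
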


\begin{proof}
Let $G=(V,E)$ be a graph with $|E|<\frac{1}{4}r^2(\Delta_2-1)(n_1+n_2)$.  We first show that for all $r\geq 2$, $G$ can be colored with at most $2r$ colors so there is no monochromatic copy of $H$ (then we will apply this with $\floor{r/2}$ in place of $r$ to get the desired result). 

Let $X=\{v\in V(G): d_G(v)\leq r(\Delta_2-1)-1\}$ and let $Y=V\setminus X$.  By Lemma \ref{lem:viz} there is an $r$-coloring of the edges inside $X$ with colors in $[r]$ such that every vertex in $X$ has degree at most $\Delta_2-1$ to $X$ in every color.  Clearly there can be no monochromatic copy of $H$ inside $X$ since $\Delta_2-1\leq \Delta_1-1$.  

Again, by Lemma \ref{lem:viz} there is an $r$-coloring of the edges between $X$ and $Y$ with colors in $[2r]\setminus [r]$ such that every vertex in $X$ has degree at most $\Delta_2-1$ to $Y$ in every color. So by Observation \ref{obs:fit} there can be no monochromatic copy of $H$ between $X$ and $Y$.

Since every vertex in $Y$ has degree at least $r(\Delta_2-1)$, we have 
$\frac{1}{2}r(\Delta_2-1)|Y|\leq |E|<\frac{1}{4}r^2(\Delta_2-1)(n_1+n_2)$ and thus $|Y|<\frac{r}{2}(n_1+n_2)\leq R_{r}(H)$, where the last inequality holds by Lemma \ref{lem:affine}.  So we may $r$-color the edges inside $Y$ with colors from $[r]$ so there is no monochromatic copy of $H$.  Note that while we have used the colors from $[r]$ for edges inside $X$ and edges inside $Y$, there can be no monochromatic copy of $H$ which uses edges from $X$ and edges from $Y$ (because $H$ is connected).

Now by applying the above result with $\floor{\frac{r}{2}}$ in place of $r$, we get $\widehat{R}_r(H)\geq \frac{\floor{\frac{r}{2}}^2}{4}(\Delta_2-1)(n_1+n_2)\geq \frac{r^2}{36}(\Delta_2-1)(n_1+n_2)$, where the last inequality holds since $r\geq 2$ and thus $\floor{\frac{r}{2}}\geq \frac{r}{3}$.
\end{proof}

\subsection{Putting the cases together}


\begin{proof}[Proof of Theorem \ref{thm1}]
Let $H$ be a connected graph with $m$ edges such that $H$ is not a star.  

If $H$ has chromatic number at least 3, then the result follows from Proposition \ref{prop:chi3}.  So suppose $H$ is an $(n_1, n_2,\Delta_1,\Delta_2)$-bipartite graph and without loss of generality $\Delta_1\geq \Delta_2$.  Let $\{V_1, V_2\}$ be the bipartition of $V(H)$ with $|V_i|=n_i$ for all $i\in [2]$.  Since $H$ is not a star, Observation \ref{obs:star} implies that $\Delta_1\geq \Delta_2\geq 2$.  Note that $\Delta_2\geq 2$ implies that $\Delta_2-1\geq \frac{\Delta_2}{2}$.  So by Proposition \ref{prop:weakbip} we have $$\widehat{R}_r(H)\geq \frac{r^2}{36}(\Delta_2-1)(n_1+n_2)\geq \frac{r^2}{72}\Delta_2(n_1+n_2)>\frac{r^2}{72}\Delta_2n_2\geq \frac{r^2}{72}m,$$ where the last inequality holds since $m=\sum_{v\in V_2}d(v)\leq \Delta_2 n_2$.  
\end{proof}

\section{A lower bound on the size-Ramsey number of connected bipartite graphs $H$ in terms of $\beta(H)$}

\begin{proof}[Proof of Theorem \ref{thm2}]  Let $H$ be an $(n_1, n_2, \Delta_1, \Delta_2)$-bipartite graph which is not a star.  Thus by Observation  \ref{obs:star}, we have $n_1, n_2, \Delta_1, \Delta_2\geq 2$.  Without loss of generality, suppose $\Delta_1n_1\geq \Delta_2n_2$.

Let $G=(V,E)$ be a graph with $|E|< \frac{1}{4}r^2(\Delta_1-1)n_1$.  We first show that for all $r\geq 2$, $G$ can be colored with at most $6r$ many colors so there is no monochromatic copy of $H$ (then we will apply this with $\floor{\frac{r}{6}}$ in place of $r$ to get the desired result).

Let $X=\{v\in V: d(v)\leq r(\Delta_1-1)-1\}$ and $Y=V\setminus X$.  By Lemma \ref{lem:viz} there is an $r$-coloring of the edges in $G[X]$ with color set $[r]$ such that every vertex in $X$ has degree at most $\Delta_1-1$ to $X$ in every color.  Thus there is no monochromatic copy of $H$ inside $X$.  

Since every vertex in $Y$ has degree at least $r(\Delta_1-1)$, we have 
$\frac{1}{2}r(\Delta_1-1)|Y|\leq |E|<\frac{r^2}{4}(\Delta_1-1)n_1,$ which implies 
\begin{equation}\label{eq:Y}
|Y|<\frac{r}{2}n_1. 
\end{equation}

By Lemma \ref{lem:affine}, we have $|Y|<\frac{r}{2}n_1\leq R_r(H)$ and thus we can color the edges inside $Y$ with colors from $[r]$ such that there is no monochromatic copy of $H$ in $Y$.  Since $H$ is connected, there can be no monochromatic copy of $H$ which uses edges from both inside $X$ and inside $Y$.  

What remains is to color the edges between $X$ and $Y$ using colors from $[6r]\setminus [r]$.

\noindent
\tbf{Case 1} ($\Delta_1\leq \Delta_2$). Since every vertex in $X$ has degree at most $r(\Delta_1-1)-1$, by Lemma \ref{lem:viz} there is an $r$-coloring of the edges between $X$ and $Y$ with color set $[2r]\setminus [r]$ such that every vertex in $X$ has degree at most $\Delta_1-1\leq \Delta_2-1$ in every color.  Thus by Observation \ref{obs:fit} there is no monochromatic copy of $H$ between $X$ and $Y$.  In this case we have colored all the edges of $G$ with colors from $[2r]$ such that there is no monochromatic copy of $H$.

\noindent
\tbf{Case 2} ($n_1\leq n_2$). Let $Y_1, \dots, Y_r$ be a partition of $Y$ into parts each of order at most $n_1-1<\min\{n_1, n_2\}$.  Now for all $i\in [r]$ color every edge from $Y_i$ to $X$ with color $r+i$ and note that by Observation \ref{obs:fit} there is no monochromatic copy of $H$ between $X$ and $Y$.  In this case we have colored all the edges of $G$ with colors from $[2r]$ such that there is no monochromatic copy of $H$.

\noindent
\tbf{Case 3} ($\Delta_1> \Delta_2$ and $n_1>n_2$).  In what follows, we split into two subcases depending on whether $\Delta_1$ is sufficiently large in terms of $n_1$.  

\tbf{Subcase 3.1} ($\Delta_1\geq \frac{\sqrt{n_1}}{4r^2}$). Assign each vertex of $Y$ independently and uniformly at random to one of the sets $Y_1, \dots, Y_{2r}$.  For all $i\in [2r]$, we have 
\begin{equation}\label{eq:E|Yi|}
\EE[|Y_i|]=\frac{1}{2r}|Y|\stackrel{\eqref{eq:Y}}{<}\frac{n_1}{4}
\end{equation}
and for all $v\in X$ and $i\in [2r]$,
\begin{equation}\label{eq:Edx}
\EE[d(x,Y_i)]\leq \frac{r(\Delta_1-1)-1}{2r}< \frac{1}{2}(\Delta_1-1).  
\end{equation}
So by Lemma \ref{lem:chernoff} (Chernoff), for all $i\in [2r]$ we have
\begin{align*}
\mathbb{P}\sqbs{|Y_i| \ge \frac{n_1}{2}}\stackrel{\eqref{eq:E|Yi|}}{\leq} \mathbb{P}\sqbs{|Y_i| \ge \EE[|Y_i|]+\frac{n_1}{4}}\stackrel{\eqref{eq:E|Yi|}}{\leq} \exp\of{-\frac{(\frac{n_1}{4})^2}{2(\frac{n_1}{4}+\frac{n_1}{12})}}=\exp\of{-\frac{3n_1}{32}}< \frac{1}{4r}
\end{align*}
and for all $v\in X$ and $i\in [2r]$, we have 
\begin{align*}
\mathbb{P}\sqbs{d(v,Y_i) \ge \Delta_1}\stackrel{\eqref{eq:Edx}}{\leq}\mathbb{P}\sqbs{d(v,Y_i) \ge \EE[d(x,Y_i)]+ \frac{\Delta_1}{2}}&\stackrel{\eqref{eq:Edx}}{<} \exp\of{-\frac{(\frac{\Delta_1}{2})^2}{2(\frac{\Delta_1}{2}+\frac{\Delta_1}{6})}}\\
&\leq \exp\of{-\frac{3\sqrt{n_1}}{64r^2}}\\
&< \frac{1}{2r^3(\Delta_1-1)n_1}< \frac{1}{4r|X|},
\end{align*}
where the last three inequalities hold since $\Delta_1\geq \frac{\sqrt{n_1}}{4r^2}$ by the case, $n_1\geq \frac{n}{2}$ is sufficiently large in terms of $r$ (and $\Delta_1\leq n_2<n_1$), and $|X|\leq 2|E|<\frac{r^2}{2}(\Delta_1-1)n_1$ respectively. 

Now by the union bound, the probability that there exists a partition $\{Y_1, \dots, Y_{2r}\}$ of $Y$ such that for all $i\in [2r]$, $|Y_i|\leq \frac{n_1}{2}$ is greater than $1/2$, and the probability that there exists a partition $\{Y_1, \dots, Y_{2r}\}$ of $Y$ such that for all $i\in [2r]$ every vertex in $X$ has degree at most $\Delta_1-1$ to $Y_i$ is greater than $1/2$. So with positive probability, there exists a partition $\{Y_1, \dots, Y_{2r}\}$ of $Y$ satisfying both.  Now  for all $i\in [2r]$ color all edges from $X$ to $Y_i$ with color $r+i$.  If there was a monochromatic copy of $H$ between $X$ and $Y_i$, then since $|Y_i|<n_1$, we must have that the part of size $n_2$ is embedded in $Y_i$ and the part of size $n_1$ is embedded in $X$, but every vertex in $X$ has degree less than $\Delta_1$ to $Y_i$, so this is impossible.  In this case we have succeeded in coloring all of the edges of $G$ with color set $[3r]$ such that there is no monochromatic copy of $H$.

\tbf{Subcase 3.2} ($2\leq \Delta_1< \frac{\sqrt{n_1}}{4r^2}$). Note that in this case we have
\begin{equation}\label{eq:n2}
\frac{n_1}{n_2}\leq \frac{n_1+n_2-1}{n_2}\leq \frac{e(H)}{n_2}\leq \Delta_2<\Delta_1<\frac{\sqrt{n_1}}{4r^2}.
\end{equation}

Let $Y_0=\{v\in Y: d(v, X)\geq \frac{r}{2}\Delta_1\frac{n_1}{n_2}\}$ and $Y'=Y\setminus Y_0$.  We have $$\frac{r}{2}\Delta_1\frac{n_1}{n_2}|Y_0|\leq e(X,Y)\leq |E|< \frac{1}{4}r^2\Delta_1n_1$$ and thus $|Y_0|< \frac{r}{2}n_2$.  As in Case 2, we can $r$-color the edges between $Y_0$ and $X$ using color set $[2r]\setminus [r]$ such that there is no monochromatic copy of $H$ (by partitioning $Y_0$ into $r$ parts $Y_1, \dots, Y_r$ each of order less than $n_2=\min\{n_1,n_2\}$ and using color $r+i$ on every edge from $Y_i$ to $X$).

It remains to deal with the edges between $X$ and $Y'$.  Let $G'=G[X,Y']$, $N_1=|X|$, $N_2=|Y'|$, $N=N_1+N_2$, and let $v_1, \dots, v_N$ be an enumeration of $V\setminus Y_0$ such that $X=\{v_1, \dots, v_{N_1}\}$ and $Y=\{v_{N_1+1}, \dots, v_{N_2}\}$.  We assign each vertex of $X$ independently and uniformly at random to one of the sets $X_1,\dots, X_{2r}$ and assign each vertex of $Y'$ independently and uniformly at random to one of the sets $Y_1', \dots, Y_{2r}'$.  For all $i,j\in [2r]$, let $Z_{i,j}=e(X_i, Y_j')$.  We have 
\begin{equation}\label{eq:Zij}
\mathbb{E}\sqbs{Z_{i,j}}=\frac{1}{(2r)^2}|E(G')|< \frac{\frac{1}{4}r^2(\Delta_1-1)n_1}{(2r)^2}< \frac{\Delta_1n_1}{16}.
\end{equation}

Note that every vertex in $X$ has degree at most $r(\Delta_1-1)-1<r\Delta_1$ to $Y'$, and by the upper bound on $\Delta_1$ in this case together with \eqref{eq:n2}, every vertex in $Y'$ has degree at most $$\frac{r}{2}\Delta_1\frac{n_1}{n_2}<\frac{r}{2}\frac{\sqrt{n_1}}{4r^2}\frac{\sqrt{n_1}}{4r^2}=\frac{n_1}{32r^3}$$ to $X$.  Thus by changing the assignment of $x\in X$, we can change the value of $Z_{i,j}$ by less than $r\Delta_1$ and by changing the assignment of $y\in Y'$, we can change the value of $Z_{i,j}$ by less than $\frac{n_1}{32r^3}$.  So we will be in a position to apply Lemma \ref{lem:mcd} (McDiarmid) with $\sum_{i\in [N]} c_i^2=\sum_{v\in X}d(v,Y')^2+\sum_{v\in Y'}d(v,X)^2$.  To say this a bit more formally, for all $k\in [N]$, let $S_k$ be a random variable which equals $\ell$ if and only if $k\in [N_1]$ and $v_k\in X_\ell$, or $k\in [N]\setminus [N_1]$ and $v_k\in Y_\ell'$.  Note that if $\mathbf{s}$ and $\mathbf{s'}$ are two output vectors of $(S_1, \dots, S_N)$ which differ in exactly in the $k$th coordinate, then $|Z_{i,j}(\mathbf{s})-Z_{i,j}(\mathbf{s'})|\leq d_{G'}(v_k)$. 

Now using the upper bound on $\Delta_1$ from this case we have
\begin{align}
\sum_{u\in X}d(u,Y')^2+\sum_{v\in Y'}d(v,X)^2=\sum_{uv\in E(X,Y')}(d_{G'}(u)+d_{G'}(v))
&<\frac{r^2}{4}\Delta_1n_1(r\Delta_1+\frac{n_1}{32r^3})\notag\\
&\leq \frac{r^2}{4}\Delta_1n_1(\frac{\sqrt{n_1}}{4r}+\frac{n_1}{32r^3})\notag\\
&\leq \frac{\Delta_1n_1^2}{64r}\label{eq:mc2}
\end{align}
where we use the fact that $n_1$ is sufficiently large in terms of $r$ to get $\frac{\sqrt{n_1}}{4r}\leq \frac{n_1}{32r^3}$ in the last inequality.

Now using Lemma \ref{lem:mcd} (McDiarmid) we have 
\begin{align*}
\mathbb{P}\sqbs{Z_{i,j} \ge \frac{\Delta_1n_1}{8}} 
\stackrel{\eqref{eq:Zij}}{\le} \mathbb{P}\sqbs{Z_{i,j} \ge \EE[Z_{i,j}]+ \frac{\Delta_1n_1}{16}}
&\stackrel{\eqref{eq:mc2}}{\le} \exp\of{-\frac{2(\frac{\Delta_1n_1}{16})^2}{ \frac{\Delta_1n_1^2}{64r}}}\\
&= \exp(-\frac{r\Delta_1}{2})\leq \exp(-r)< \frac{1}{(2r)^2},
\end{align*}
where the last two inequalities hold since $\Delta_1\geq 2$ and $r\geq 6$.

So by the union bound over the $(2r)^2$ pairs $\{X_i, Y_j'\}$ with $i,j\in [2r]$, we have a partition of $X$ into $2r$ parts $X_1,\dots, X_{2r}$ and $Y'$ into $2r$ parts $Y_1', \dots, Y_{2r}'$ such that $e(X_i, Y_j')<\frac{\Delta_1n_1}{8}\leq \frac{\beta(H)}{4}$ for all $i,j\in [2r]$.  

We are now ready to color the edges between $X$ and $Y'$.  For convenience, we let the set of colors be $[2r]\times [2]$ (but note that these $4r$ colors correspond to colors in $[6r]\setminus [2r]$).  Consider a decomposition of $K_{2r,2r}$ (with parts $\{x_1, \dots, x_{2r}\}$ and $\{y_1, \dots, y_{2r}\}$) into $2r$ perfect matchings; in other words, consider a proper $2r$-edge coloring of $K_{2r,2r}$ with colors $[2r]$. For all $i,j\in [2r]$, let $k_{i,j}\in [2r]$ be the color of the edge $\{x_i,y_j\}$.  Since the number of edges between $X_i$ and $Y_j'$ is less than $\frac{\beta(H)}{4}$, we can color the edges between $X_i$ and $Y_j'$ with two colors $(k_{i,j}, 1)$ and $(k_{i,j},2)$ such that there is no monochromatic copy of $H$ in $G[X_i, Y_j']$ by Proposition \ref{prop:beck}.  Note that if $k:=k_{i_1, j_1}=k_{i_2, j_2}$ for distinct $(i_1, j_1)$, $(i_1, j_2)$, then $i_1\neq i_2$ and $j_1\neq j_2$.  Note that we have used the same two colors $(k,1)$ and $(k,2)$ on edges in $G[X_{i_1}, Y'_{j_1}]$ and $G[X_{i_2}, Y'_{j_2}]$; however, the edges of color $(k, 1)$ and $(k,2)$ in $G[X_{i_1}, Y'_{j_1}]$ and $G[X_{i_2}, Y'_{j_2}]$ are disconnected from each other, so there is no monochromatic copy of $H$ which uses edges from both $G'[X_{i_1}, Y'_{j_1}]$ and $G'[X_{i_2}, Y'_{j_2}]$.  So we have colored the edges between $X$ and $Y'$ with $4r$ colors such that there is no monochromatic copy of $H$.  Together with the $r$ colors already used inside $X$ and inside $Y$, and the $r$ colors between $Y_0$ and $X$, we have used a total of $6r$ colors to color all of the edges of $G$ such that there is no monochromatic copy of $H$.

Finally, we apply the above result with $\floor{\frac{r}{6}}$ in place of $r$ to get \[\widehat{R}_r(H)\geq \frac{\floor{\frac{r}{6}}^2}{4}(\Delta_1-1)n_1\geq \frac{r^2}{4\cdot 144}(\Delta_1-1)n_1\geq 
\frac{r^2}{8\cdot 144}\Delta_1n_1\geq \frac{r^2}{16\cdot 144}\beta(H)=\frac{r^2}{2304}\beta(H),\]
where we used that fact that $r\geq 6$ to get $\floor{\frac{r}{6}}\geq \frac{r}{12}$, the fact that $\Delta_1\geq 2$ to get $\Delta_1-1\geq \frac{\Delta_1}{2}$, and the fact that $\Delta_1n_1\geq \Delta_2n_2$ to get $\Delta_1n_1\geq \frac{\beta(H)}{2}$.  
\end{proof}

\section{Size-Ramsey numbers of $\alpha$-full trees}

\subsection{$\alpha$-full trees}

The following lemma and corollary are implicit in \cite[Lemma 4.3]{Del}, but we include the proofs for the readers convenience.

\begin{lemma}\label{lem:bipavdeg}
Let $G$ be a bipartite graph with bipartition $\{V_1, V_2\}$ and the average degree of vertices in $V_i$ is $d_i>0$ for all $i\in [2]$. Then $G$ has a subgraph $H$ such that the minimum degree in $H$ of vertices in $V_i$ is greater than $\frac{d_i}{2}$ for all $i\in [2]$. 
\end{lemma}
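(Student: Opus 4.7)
The plan is a greedy iterative-deletion argument. Set $H_0:=G$ and, as long as there exists some $v \in V_i \cap V(H_t)$ with $d_{H_t}(v) \leq d_i/2$ for some $i \in [2]$, form $H_{t+1}$ from $H_t$ by deleting one such vertex. Since $V(G)$ is finite, the process terminates; let $H := H_T$ be the terminal graph. By the stopping condition, every $v \in V_i \cap V(H)$ satisfies $d_H(v) > d_i/2$, which is precisely the conclusion. So the only remaining task is to verify that $V_i \cap V(H)$ is non-empty for each $i$.

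To establish non-emptiness, I would track the total number of edges deleted. Let $k_i$ denote the number of $V_i$-vertices removed during the process. Since each removed vertex $v \in V_i$ contributes at most $d_i/2$ edges to the deletion count at the moment of its removal,
\[e(G)-e(H)\;=\;\sum_{j=1}^{T}d_{H_{t_j}}(v_j)\;\leq\;\tfrac{d_1}{2}k_1+\tfrac{d_2}{2}k_2.\]
Using the two identities $e(G)=|V_1|d_1=|V_2|d_2$ and $k_i \le |V_i|$, the right-hand side is at most $e(G)$, and equality can hold only if $k_i=|V_i|$ for both $i$, and moreover \emph{every} deletion removes a vertex whose current degree is exactly $d_i/2$.

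Now suppose for contradiction that $V(H) = \emptyset$. Then $k_i = |V_i|$ and $e(H) = 0$, so equality must hold throughout the displayed inequality. In particular, the final deletion is performed on a one-vertex graph, whose unique vertex has degree $0$; this forces $d_i/2 = 0$ for its side $i$, contradicting the hypothesis $d_i > 0$. The case in which only one of $V_1(H),V_2(H)$ is empty reduces to the previous one, since the remaining isolated vertices would have degree $0 \le d_i/2$ and the process would not yet have terminated. Hence $V(H)$ meets both parts, and the lemma follows. The main (mild) obstacle is the \emph{strict} degree inequality $d_H(v) > d_i/2$; it is handled cleanly by exploiting the degree of the very last vertex removed together with $d_i > 0$, which avoids any need for a more elaborate potential-function optimization.
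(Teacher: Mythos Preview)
Your proof is correct and follows exactly the same iterative-deletion strategy as the paper's proof. You have been more explicit than the paper about why the terminal graph meets both sides of the bipartition (the paper simply asserts that the total number of edges deleted is strictly less than $\tfrac{d_1}{2}|V_1|+\tfrac{d_2}{2}|V_2|=e(G)$ and concludes non-emptiness), but the underlying argument is identical.
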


Note that the average degree condition is equivalent to saying $e(G)=d_1|V_1|=d_2|V_2|$.

\begin{proof}
If there is a vertex in $V_i$ of degree at most $d_i/2$, delete it.  Repeat this process.  The total number of edges deleted is less than $\frac{d_1}{2}|V_1|+\frac{d_2}{2}|V_2|=e(G)$.  So the process must end with a non-empty subgraph which satisfies the desired conditions.
\end{proof}

\begin{corollary}\label{cor:upper}For all trees $T$ with $n_1$ vertices in one part and $n_2$ vertices in the other, $\widehat{R}_r(T)\leq (2rn_1+1)(2rn_2+1)= 4r^2n_1n_2+2r(n_1+n_2)+1$.
\end{corollary}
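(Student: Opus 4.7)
The plan is to take $G = K_{A,B}$ with $|A| = 2rn_1 + 1$ and $|B| = 2rn_2 + 1$, so that $e(G) = (2rn_1+1)(2rn_2+1)$, and show that every $r$-edge-coloring of $G$ contains a monochromatic copy of $T$. Fix a bipartition $\{V_1, V_2\}$ of $T$ with $|V_i| = n_i$ for $i \in [2]$.

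Given any $r$-coloring of $E(G)$, pigeonhole produces a color class $G_i$ with at least $e(G)/r$ edges. Viewing $G_i$ as a bipartite graph with parts $A$ and $B$, the average degree of vertices in $A$ is at least $(2rn_2+1)/r > 2n_2$ and the average degree of vertices in $B$ is at least $(2rn_1+1)/r > 2n_1$. I would then apply Lemma \ref{lem:bipavdeg} to $G_i$ to extract a subgraph $H \subseteq G_i$ in which every vertex of $A$ has degree strictly greater than $n_2$ (hence at least $n_2 + 1$) and every vertex of $B$ has degree strictly greater than $n_1$ (hence at least $n_1 + 1$).

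Finally I would embed $T$ into $H$ greedily. Root $T$ at any vertex $v_0 \in V_1$, place $v_0$ at an arbitrary vertex of $A$, and process the remaining vertices of $T$ in breadth-first order. When extending the embedding by a new vertex $u$ whose parent $v$ has already been mapped to some $x$, the vertex $u$ lies in the part of $T$ opposite to that of $v$. If $v \in V_1$, then $x \in A$ has at least $n_2 + 1$ neighbors in $B$, while at most $n_2 - 1$ vertices of $B$ have been used so far (strictly fewer than $n_2$ vertices of $V_2$ have been embedded prior to $u$), so $x$ has a free neighbor in $B$ to which we can map $u$; the case $v \in V_2$ is symmetric. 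Thus the entire tree $T$ is embedded monochromatically in color $i$.

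The only non-routine point in this plan is verifying that the degree bounds line up correctly at every step of the greedy embedding. The choices $|A| = 2rn_1 + 1$ and $|B| = 2rn_2 + 1$ are tuned precisely so that, after losing a factor of $r$ to pigeonhole and a factor of $2$ to Lemma \ref{lem:bipavdeg}, the minimum degrees in $H$ still exceed $n_2$ and $n_1$ respectively; the $+1$'s in the factors ensure that the strict inequality needed by Lemma \ref{lem:bipavdeg} survives into the greedy step, so that unused neighbors are always available.
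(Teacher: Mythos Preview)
Your proposal is correct and follows essentially the same approach as the paper: take $K_{2rn_1+1,\,2rn_2+1}$, pass to the majority color class, apply Lemma~\ref{lem:bipavdeg} to obtain a subgraph with minimum degree exceeding $n_{3-i}$ on each side, and then greedily embed $T$. The only minor imprecision is that after applying Lemma~\ref{lem:bipavdeg} the subgraph $H$ need not contain all of $A$ and $B$, so ``every vertex of $A$'' and ``an arbitrary vertex of $A$'' should be read as referring to $A\cap V(H)$; this does not affect the argument.
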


\begin{proof}
Let $\{V_1, V_2\}$ be the bipartition of $K:=K_{2rn_1+1, 2rn_2+1}$ so that $|V_i|=2rn_i+1$ for all $i\in [2]$.  In any $r$-coloring of $K$, the majority color class, call it $G_1$, has more than $4rn_1n_2+2n_1+2n_2$ edges, so for all $i\in [2]$, the average degree in $G_1$ of the vertices in $V_i$ is more than $2n_{3-i}$.  Now applying Lemma \ref{lem:bipavdeg} to $G_1$, we get a subgraph $H\subseteq G_1$ having the property that every vertex in $V_i$ has degree greater than $n_{3-i}$ in $H$.  Now we can greedily embed $T$ in $H$.  
\end{proof}

Now we combine Theorem \ref{thm2} and Corollary \ref{cor:upper} to determine the correct order of magnitude of the $r$-color size-Ramsey numbers of $\alpha$-full trees. 

\begin{proof}[Proof of Theorem \ref{thm4}]
Let $0<\alpha\leq 2$ and let $T$ be an $(n_1, n_2, \Delta_1, \Delta_2)$-tree such that $T$ is $\alpha$-full; i.e.~$\beta(T)\geq \alpha n_1n_2$.  By Theorem \ref{thm2} and Corollary \ref{cor:upper} we have 
\[\frac{\alpha r^2}{2304} n_1n_2\leq \frac{r^2}{2304}\beta(T)\leq \widehat{R}_r(T)\leq 4r^2n_1n_2+2r(n_1+n_2)+1.\qedhere\]
\end{proof}

\subsection{Double stars}

For double stars, we can slightly improve the lower bound implied by Theorem \ref{thm2}, Proposition \ref{prop:weakbip}, or Theorem \ref{thm4}.

\begin{proof}[Proof of Theorem \ref{thm3}] Let $r\geq 2$ and $n\geq m\geq 1$ be integers such that $n+m+2\geq \frac{(\ceiling{r/2}+1)^2}{2}$.  

The upper bound follows directly from Corollary \ref{cor:upper}.  For the lower bound, we split into two cases.

\tbf{Case 1} ($r\geq 3$).  Let $G=(V,E)$ be a graph with $|E|< \frac{\floor{r/2}\ceiling{r/2}}{4}m(n+m+2)$.  Let $X=\{v\in V: d_G(v)\leq \floor{\frac{r}{2}}m-1\}$ and let $Y=V\setminus X$.  

By Lemma \ref{lem:viz}, we can color all of the edges incident with $X$ with $\floor{\frac{r}{2}}$ colors so that every vertex in $X$ has degree at most $m$ in every color.  There is no monochromatic copy of $S_{n,m}$ incident with $X$ because the central edge would have to be adjacent to $X$, but every vertex in $X$ has degree at most $m\leq n$ (whereas the central edge of $S_{n,m}$ has a vertex of degree $m+1$ and a vertex of degree $n+1$).

Since every vertex in $Y$ has degree at least $\floor{\frac{r}{2}}m$, we have $\frac{1}{2}\floor{\frac{r}{2}}m|Y|\leq |E|<\frac{\floor{r/2}\ceiling{r/2}}{4}m(n+m+2)$ and thus  $|Y|<\frac{\ceiling{r/2}}{2}(n+m+2)\leq R_{\ceiling{r/2}}(S_{n,m})$ (where the last inequality holds by Lemma \ref{lem:affine} since $n+m+2\geq \frac{(\ceiling{r/2}+1)^2}{2}$). So there is a coloring of the edges in $G[Y]$ with the other $\ceiling{r/2}$ colors so that there is no monochromatic copy of $S_{n,m}$ in $G[Y]$.  

Thus we have $\widehat{R}_r(S_{n,m})\geq \frac{\floor{r/2}\ceiling{r/2}}{4}m(n+m+2)\geq \frac{r^2-1}{16}m(n+m+2)$.

\tbf{Case 2} ($r=2$).  Let $G=(V,E)$ be a graph with $|E|<\frac{1}{2}(m+1)(n+m+2)$.  Let $X=\{v\in V: d(v)\leq m\}$ and $Y=V\setminus X$.  Color all edges incident with $X$ red and all of the remaining edges (i.e.~the edges inside of $Y$) blue.  As before, there is no red copy of $S_{n, m}$ because the central edge must be incident with $X$, but every vertex in $X$ has degree at most $m$.  Now since every vertex in $Y$ has degree at least $m+1$ we have $\frac{1}{2}(m+1)|Y|\leq |E|< \frac{1}{2}(m+1)(n+m+2)$ and thus $|Y|<n+m+2$.  So there is no blue copy of $S_{n,m}$.
\end{proof}

Note that the lower bound in Theorem \ref{thm3} can be improved by a factor of 2 whenever there exists an affine plane of order $\ceiling{\frac{r}{2}}-1$ (see the discussion preceding Lemma \ref{lem:affine}).  

In the case $n=m$, the upper bound can be improved a bit further using the best upper bounds on the $r$-color Ramsey number of $S_{n,n}$ or the $r$-color bipartite Ramsey numbers of $S_{n,n}$ (which is defined to be the smallest integer $N$ such that in every $r$-coloring of $K_{N,N}$, there is a monochromatic copy of $S_{n,n}$).  In particular, it follows from results in \cite{BDO} that $\widehat{R}_r(S_{n,n})\leq \min\{\frac{(2r-1)^2}{2}(n+1)^2, \big(2r-3+\frac{2}{r}+O(\frac{1}{r^2})\big)^2n^2\}$.  In the very special case when $r=2$ and $n=m$, it follows from Theorem \ref{thm3} (for the lower bound) and \cite{BDO, HJ} (for the upper bound) that $(n+1)^2\leq \widehat{R}(S_{n,n})\leq (2n+1)^2.$  It would be interesting to see if the upper bounds on $\widehat{R}_r(S_{n,m})$ can be improved further by using something other than a complete or complete bipartite host graph.

\begin{problem}
For all $r\geq 2$, improve the bounds on $\widehat{R}_r(S_{n,m})$; in particular, when $n=m$.  
\end{problem}

\section{Conclusion and open problems}\label{sec:con}

Theorem \ref{thm2} combined with \eqref{eq:del} implies that for all non-star trees $T$, we have $\Omega(r^2\beta(T))=\widehat{R}_r(T)=O_r(\beta(T))$.  It would be interesting to determine a more explicit upper bound on $\widehat{R}_r(T)$.  As mentioned in the introduction, Dellamonica's upper bound on the 2-color size-Ramsey numbers of trees is actually a consequence of a stronger result which just as easily gives an upper bound on the $r$-color size-Ramsey number of trees.  What he proves is that for all $(n_1, n_2, \Delta_1, \Delta_2)$-trees $T$ and $0<\gamma\leq 1$, there exists a graph $G$ with at most $O_\gamma(\beta(T))$ edges such that every subgraph $G'\subseteq G$ with $e(G')\geq \gamma e(G)$ contains a copy of $T$.  So if one applies this result with $\gamma=\frac{1}{r}$, the upper bound follows.  While it may be possible to go through Dellamonica's paper and determine an explicit constant depending on $\gamma$ (and thus on $r$), it does not seem like a trivial matter to do so.

\begin{problem}\label{prob:final}~
\begin{enumerate}
\item Is it true that for all $r\geq 2$ and all trees $T$, we have $\widehat{R}_r(T)=O(r^2(\log r) \beta(T))$?  If so, this would match the best known upper bound for paths and bounded degree trees.  

\item Is it true that for all $r\geq 2$ and all trees $T$ we have $\widehat{R}_r(T)=O(r^2 \beta(T))$?  If so, this would match the lower bound for all non-star trees and match the upper bound for $\alpha$-full trees.
\end{enumerate}
\end{problem}

\noindent
\textbf{Note added in paper:} Shortly before the final revision of the paper,
Beke, Li, and Sahasrabudhe \cite{BLS} proved that $\widehat{R}_r(P_n)=\Theta(r^2(\log r)n)$.  This is a major breakthrough which drastically alters the picture regarding the $r$-color size-Ramsey numbers of trees.  As it relates to this paper, their result in particular implies that Problem \ref{prob:final}(ii) has a negative answer.  It also implies that the lower bounds in Theorem \ref{thm1} and Theorem \ref{thm2} can be improved by a factor of $\log r$ in certain cases (at the moment, the only such case we know of is paths, but it seems likely that their result can be generalized to bounded degree trees).  However, this state of affairs is perhaps more intriguing because it means that for certain trees $P_n$, their result implies $\widehat{R}_r(P_n)=\Theta(r^2 (\log r) \beta(P_n))$, but on the other hand, for $\alpha$-full trees $T$ (where $T$ is not a star), Theorem \ref{thm4} implies $\widehat{R}_r(T)=\Theta(r^2 \beta(T))$.  So it would be interesting to understand what properties of trees $T$ (other than $\beta(T)$) influence the $r$-color size-Ramsey number of $T$.

\subsection*{Acknowledgements and AI tools declaration} Thank you to Deepak Bal for our fruitful conversations on this topic.  Many thanks to the referees whose careful reading led to a number of improvements throughout the paper.

As part of this final revision, I essentially used Gemini as an additional referee.  During this process, Gemini discovered an error in the original version of Proposition \ref{prop:K2} (basically, the upper bound on $d'$ wasn't depending on $\Delta$ in any way) which had a major impact on the statement of Theorem \ref{thm:kriv}.  I was originally claiming that Krivelevich's upper bound on the $r$-color size Ramsey number of paths could be generalized to say that $n$-vertex trees $T$ with maximum degree $\Delta$ satisfy $\widehat{R}_r(T) = O(\Delta r^2(\log r)n)$); however, after fixing the error, the bound becomes $\widehat{R}_r(T) = O(\Delta^3 r^2\log(\Delta r)n)$.  I would like to reiterate that this error was purely my fault; that is, there is no such error in Krivelevich's paper, nor does Krivelevich make any claims regarding generalizations to bounded degree trees.

\bibliographystyle{abbrv}
\bibliography{references}

\newpage

\section{Appendix: An upper bound on the size-Ramsey numbers of bounded degree trees}\label{sec:app}

As mentioned in the introduction, Krivelevich \cite{K2} proved that $\widehat{R}_r(P_n)=O(r^2(\log r) n)$.  However, his method of proof is more general and (after an appropriate modification of the calculations) yields an explicit upper bound of the same type for all bounded degree trees. To make this result concrete, we will do the calculations in this appendix. 

We begin with a classic result of Friedman and Pippenger \cite{FP}.

\begin{theorem}[Friedman, Pippenger]\label{thm:FP}Let $\Delta$ and $n$ be positive integers and let $G$ be a graph. If for all $X\subseteq V(G)$ with $|X|\leq 2n-2$ we have $|N(X)\setminus X|\geq \Delta|X|$, then $G$ contains every tree with $n$ vertices and maximum degree at most $\Delta$. 
\end{theorem}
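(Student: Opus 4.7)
The plan is to prove Friedman and Pippenger's theorem by induction on the number of vertices embedded so far, maintaining a strengthened ``reservoir'' invariant. Specifically, I would prove by induction on $k := |V(T')|$ for subtrees $T' \subseteq T$ the following: under the expansion hypothesis, there is an embedding $f\colon V(T') \to V(G)$ together with a pairwise disjoint family of reserved neighborhoods $R(u) \subseteq N_G(u) \setminus f(V(T'))$ for each $u \in f(V(T'))$, where $|R(u)|$ is calibrated to the remaining degree requirement $\Delta - d_{T'}(f^{-1}(u))$. Setting $|X|=1$ in the expansion hypothesis yields $\delta(G) \geq \Delta$, so the base case (embed an arbitrary root and reserve $\Delta$ of its neighbors) is immediate.

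For the inductive step, pick a vertex $w \in V(T)\setminus V(T')$ adjacent in $T$ to some $v \in V(T')$. Embed $f(w):=x$ for an arbitrary $x \in R(v)$, remove $x$ from $R(v)$, and attempt to construct $R(x)$ along with a refreshed reservoir system for the augmented image $S' := f(V(T')) \cup \{x\}$. Treating this as a bipartite allocation problem between $S'$ and $V(G)\setminus S'$ in which each $u \in S'$ demands $\Delta - d_{T'\cup\{w\}}(f^{-1}(u))$ \emph{private} external neighbors, Hall's theorem reduces the task to verifying that for every $Y \subseteq S'$,
\[|N_G(Y) \setminus S'| \geq \sum_{u \in Y}\bigl(\Delta - d_{T'\cup\{w\}}(f^{-1}(u))\bigr).\]
To establish this Hall-type condition, I would apply the expansion hypothesis to a carefully chosen set $X$ depending on $Y$ (most directly, $X$ is $Y$ together with its currently held reservoirs). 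The factor of $2$ in ``$|X|\leq 2n-2$'' is precisely what allows $X$ to contain both image and reservoir vertices simultaneously while remaining in the regime where the hypothesis is valid, and the resulting $|N(X)\setminus X|\geq \Delta|X|$ will, after accounting for which neighbors land in $S'$ versus in the reservoirs of vertices outside $Y$, deliver the required Hall inequality.

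The main obstacle I expect is the precise bookkeeping in this reallocation step: although the intuition (``expansion guarantees enough free room for everyone'') is clear, converting the raw expansion output into the clean Hall bound above requires tracking two quantities at once -- the image $S$ and the collective reservoir $R = \bigcup_u R(u)$ -- and subtracting off the portion of $N(X)$ that is already claimed. A deficiency version of Hall's theorem, or equivalently a nested induction that allows reservoirs to be globally shuffled rather than only locally refilled, is the standard device for handling this competition among embedded vertices for a limited pool of external neighbors. Once this reallocation lemma is established, iterating the inductive step $n-1$ times (never exceeding the $2n-2$ threshold at any stage) produces the desired embedding of $T$ into $G$.
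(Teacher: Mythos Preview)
The paper does not prove this theorem at all: it is stated as a classical result of Friedman and Pippenger and invoked as a black box in the appendix, so there is no in-paper argument to compare your proposal against.

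On the substance of your sketch: the high-level plan (embed $T$ one leaf at a time while carrying an inductive invariant strong enough to guarantee the next step) is exactly right, but the particular invariant you propose---pairwise disjoint reservoirs $R(u)$ refreshed via Hall's theorem---does not go through as stated. Your claim that the threshold $|X|\le 2n-2$ ``is precisely what allows $X$ to contain both image and reservoir vertices simultaneously'' is where it breaks. If the current subtree $T'$ has $k$ vertices, the total reservoir size is
\[
\sum_{u\in f(V(T'))}\bigl(\Delta-d_{T'}(f^{-1}(u))\bigr)=\Delta k-2(k-1),
\]
so image plus reservoirs together occupy $(\Delta-1)k+2$ vertices. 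For $\Delta\ge 3$ and $k$ close to $n$ this already exceeds $2n-2$, and hence for large $Y\subseteq S'$ the set $X=Y\cup\bigcup_{u\in Y}R(u)$ you want to feed into the expansion hypothesis can lie outside its range of validity. The Hall inequality you need therefore does not follow from the hypothesis in the way you indicate.

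Friedman and Pippenger avoid ever materialising disjoint reservoirs. Their invariant is global: an embedding $f$ of $T'$ is declared \emph{good} if, for every $X\subseteq V(G)$ with $|X|\le 2n-2$, the quantity $|N(X)\setminus X|$ is at least a corrected target that charges each unused vertex $\Delta$ and each used vertex only its remaining $T$-degree. The hypothesis says the empty embedding is good, and the key lemma---proved by an extremal argument, not Hall---is that a good embedding of a proper subtree always extends by one leaf to another good embedding (if every candidate image $x$ for the new leaf yielded a witnessing bad set $X_x$, one combines the $X_x$ to contradict goodness of $f$). Your final remark about ``globally shuffling'' reservoirs is gesturing toward this, but the object you would actually carry through the induction is this quantified inequality over all small $X$, not a system of disjoint reserved sets.
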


The following is exactly \cite[Proposition 3]{K2} and \cite[Proposition 6.2]{K1}. 

\begin{proposition}[Krivelevich]\label{prop:K1}
Let $c_1>c_2>1$ be reals and let $\delta=(\frac{c_2}{5c_1})^{\frac{c_2}{c_2-1}}$.  If $G=G(n,\frac{c_1}{n})$, then w.h.p.~every set of $k\leq \delta n$ vertices of $G$ spans fewer than $c_2k$ edges.  
\end{proposition}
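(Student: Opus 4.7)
The plan is a first-moment / union bound argument. For each $k\geq 2$, let $X_k$ denote the number of $k$-subsets $S\subseteq V(G)$ with $e(G[S])\geq \lceil c_2 k\rceil$; it suffices to prove $\sum_{k=2}^{\lfloor\delta n\rfloor}\EE[X_k]=o(1)$, since then a union bound together with Markov's inequality gives that w.h.p.\ no such ``bad'' subset exists, which is exactly the statement. Using the standard estimates $\binom{n}{k}\leq (en/k)^k$ and $\binom{N}{M}\leq (eN/M)^M$, together with $\binom{k}{2}\leq k^2/2$ and $p=c_1/n$, a direct calculation yields
\[
\EE[X_k]\leq \binom{n}{k}\binom{\binom{k}{2}}{\lceil c_2 k\rceil}\left(\frac{c_1}{n}\right)^{\lceil c_2 k\rceil}\leq \left[e^{1+c_2}\left(\frac{c_1}{2c_2}\right)^{c_2}\left(\frac{k}{n}\right)^{c_2-1}\right]^k,
\]
with the ceilings absorbed for large $n$ since $ekc_1/(2c_2n)<1$ on the relevant range.

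The value $\delta=(c_2/(5c_1))^{c_2/(c_2-1)}$ is reverse-engineered so that substituting $k/n=\delta$ collapses the bracket to $e^{1+c_2}/10^{c_2}=e(e/10)^{c_2}$; a short derivative check (using $1-\log 10<0$) confirms this is at most $e^2/10<1$ for every $c_2\geq 1$. Writing $\rho:=e(e/10)^{c_2}<1$ and using that the bracket is monotone increasing in $k$ (since $c_2-1>0$), one obtains the uniform bound $\EE[X_k]\leq \rho^k$ for $2\leq k\leq \delta n$. However, this geometric bound alone sums only to a constant, so a head/tail split is needed. For the tail $k\geq \log n$, $\sum_{k\geq \log n}\rho^k=O(\rho^{\log n})=O(n^{\log\rho})=o(1)$. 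For the head $2\leq k<\log n$, the sharper form $\EE[X_k]\leq [C(k/n)^{c_2-1}]^k$ with $C:=e^{1+c_2}(c_1/(2c_2))^{c_2}$ gives terms dominated by a geometric series with ratio $C(\log n/n)^{c_2-1}=o(1)$, totaling $O((\log n/n)^{2(c_2-1)})=o(1)$. Combining completes the proof.

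The hardest part will be the arithmetic bookkeeping: carefully tracking how the three factors of the union bound combine and verifying that the specific choice of $\delta$ (in particular, the constant $5$ in its definition) is exactly what makes the base of the exponential strictly less than $1$. The head/tail split is a minor but essential technicality, since the uniform $\rho^k$ bound alone would only yield a constant-probability statement rather than the required w.h.p.\ conclusion.
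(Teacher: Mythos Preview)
The paper does not supply its own proof of this proposition; it is quoted verbatim as \cite[Proposition 3]{K1} and used as a black box in the Appendix. Your proposal reproduces the standard first-moment/union-bound argument that Krivelevich gives in the cited paper, and your bookkeeping is correct: the estimate
\[
\EE[X_k]\le\left[e^{1+c_2}\Big(\tfrac{c_1}{2c_2}\Big)^{c_2}\Big(\tfrac{k}{n}\Big)^{c_2-1}\right]^k
\]
is derived exactly as you indicate, and plugging in $k/n=\delta$ does collapse the bracket to $e(e/10)^{c_2}\le e^2/10<1$. Your observation that the uniform bound $\rho^k$ alone only sums to a constant, and the remedy via a head/tail split at $k\approx\log n$, are both accurate and necessary; this is the same refinement one finds in Krivelevich's original argument. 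In short, your proof is correct and is essentially the proof the paper is invoking by citation.
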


The following is a slight generalization of \cite[Proposition 7.2]{K2}.

\begin{proposition}[Krivelevich]\label{prop:K2}
Let $d,d',r,\Delta$ be positive reals such that $d'\leq \frac{d}{4r(\Delta+1)}$ and let $G=(V,E)$ be a graph with average degree at least $d$.  If every subset $W\subseteq V$ with $|W|< (2\Delta+2)n$ spans fewer than $d'|W|$ edges, then for all $E'\subseteq E$ with $|E'|\geq \frac{|E|}{r}$, there exists $V'\subseteq V$ such that $G'=(V', E'\cap \binom{V'}{2})$ has the property that every set $X\subseteq V'$ with $|X|\leq 2n$ satisfies $|N_{G'}(X)\setminus X|\geq \Delta|X|$.
\end{proposition}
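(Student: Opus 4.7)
The plan is to apply a standard Krivelevich-style trim-to-minimum-degree argument. Start from the spanning subgraph $G_0 = (V, E')$. Since $G$ has average degree at least $d$, we have $|E| \geq d|V|/2$, so $|E'| \geq d|V|/(2r)$; equivalently $G_0$ has average degree at least $d/r$.

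First, I would extract a dense core. Iteratively delete from $G_0$ any vertex whose current degree is below a threshold $t$ (to be chosen of order $d/r$). Each such deletion removes fewer than $t$ edges, so provided the initial edge count exceeds $t|V|$, the process must terminate at a nonempty vertex set. Let $V' \subseteq V$ be the surviving vertices and $G' = (V', E')$ the resulting subgraph (meaning the edges of $E'$ restricted to $V'$); by construction, every vertex of $V'$ has $G'$-degree at least $t$.

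Second, I would verify the expansion property by contradiction. Fix $X \subseteq V'$ with $|X| \leq 2n$ and set $W = X \cup N_{G'}(X)$. If the desired conclusion fails, then $|N_{G'}(X) \setminus X| < \Delta|X|$, so $|W| < (\Delta+1)|X| \leq (2\Delta+2)n$. The sparsity hypothesis applied in $G$ (together with $G' \subseteq G$) yields $e_{G'}(W) \leq e_G(W) < d'|W|$. On the other hand every edge of $G'$ incident with $X$ lies in $W$, so
\[
t|X| \;\leq\; \sum_{v\in X} d_{G'}(v) \;\leq\; 2\,e_{G'}(W) \;<\; 2d'|W| \;<\; 2d'(\Delta+1)|X|.
\]
This forces $t < 2d'(\Delta+1)$, which will contradict the choice of $t$ of order $d/r$ combined with the hypothesis $d' \leq d/(4r)$.

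The main technical obstacle is calibrating the threshold $t$: it must be small enough for the trim process in the first step to terminate with a nonempty $V'$ (requiring roughly $t \cdot |V| \lesssim |E'|$, i.e. $t \lesssim d/r$), yet large enough for the counting in the second step to yield a contradiction (requiring roughly $t \geq 2d'(\Delta+1)$). The assumption $d' \leq d/(4r)$ is exactly the quantitative input that should allow both demands to be met simultaneously, after tracking the constants carefully.
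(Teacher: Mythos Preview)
Your approach is exactly the paper's: pass to a subgraph $G'$ on $V'$ with minimum degree $\delta(G')\ge d/(2r)$ (using that $(V,E')$ has average degree $\ge d/r$), then for $X\subseteq V'$ with $|X|\le 2n$ set $W=X\cup N_{G'}(X)$, bound $e_{G'}(W)$ from below via the minimum degree and from above via the sparsity hypothesis, and compare. The paper writes $e_{G'}(W)\ge \tfrac12|X|\delta(G')\ge \tfrac{d}{4r}|X|\ge d'|X|$ and then asserts $|W|\ge (2\Delta+2)n$, after which $|N_{G'}(X)\setminus X|\ge (2\Delta+2)n-2n\ge \Delta|X|$.

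Your caution about the constants is well placed, and in fact the calibration does \emph{not} close under the stated hypothesis $d'\le d/(4r)$. With the natural threshold $t=d/(2r)$, your chain yields $t<2d'(\Delta+1)\le \tfrac{d}{2r}(\Delta+1)$, i.e.\ $1<\Delta+1$, which is no contradiction. Equivalently, combining $e_{G'}(W)\ge \tfrac{d}{4r}|X|$ with $e_G(W)<d'|W|$ only gives $|W|>\tfrac{d}{4rd'}\,|X|\ge |X|$, not $|W|\ge(\Delta+1)|X|$. The paper's jump from $e(W)\ge d'|X|$ to $|W|\ge(2\Delta+2)n$ has the same defect: the sparsity hypothesis gives $e(W)<d'|W|$, and $d'|X|\le e(W)<d'|W|$ only yields $|W|>|X|$. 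What is actually needed for either argument to go through is $d'\le \tfrac{d}{4r(\Delta+1)}$ (or any bound making $\tfrac{d}{4rd'}\ge \Delta+1$); with that strengthening your proof is complete exactly as written. So your sketch is correct and matches the paper modulo this missing factor of $\Delta+1$, which is absent from the paper's proof as well.
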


\begin{proof}
Let $E'\subseteq E$ with $|E'|\geq \frac{|E|}{r}$ and let $G'=(V,E')$.  Since $G'$ has average degree at least $d/r$, there exists $V'\subseteq V$ such that $G'[V']$ has minimum degree at least $\frac{d}{2r}$.  
Now let $X\subseteq V'$ with $|X|\leq 2n$.  If $|X\cup N_{G'}(X)|\geq (2\Delta+2)n$, then since $|X|\leq 2n$, we have $|N_{G'}(X)\setminus X|\geq 2\Delta n\geq \Delta|X|$ as desired; so suppose $|X\cup N_{G'}(X)|< (2\Delta+2)n$.

In this case we have
$$d'(|X|+\Delta|X|)\geq d'|X\cup N_{G'}(X)|>e_{G'}(X\cup N_{G'}(X))\geq \frac{1}{2}|X|\delta(G')\geq \frac{d}{4r}|X|,$$
where the second inequality holds by the assumption that the number of edges spanned by $X\cup N_{G'}(X)$ is less than $d'|X\cup N_{G'}(X)|$.  But this implies $d'>\frac{d}{4r(\Delta+1)}$, a contradiction. 
\end{proof}

Finally, we have the desired strengthening of \cite[Theorem 5.1]{K1} from paths to bounded degree trees. 

\begin{theorem}[Krivelevich]\label{thm:kriv}
For all $\Delta\geq 2$ and $r\geq 2$ there exists $n_0$ such that if $T$ is a tree on $n\geq n_0$ vertices with maximum degree at most $\Delta$, then $\widehat{R}_r(T)\leq 5600 \Delta^3 r^2 \log(r\Delta) n$.
\end{theorem}

\begin{proof}Let $\Delta\geq 2$ and $r\geq 2$.  Let $n_0$ be a constant whose value will be determined later but only depends on $\Delta$ and $r$.  Let $T$ be a tree on $n\geq n_0$ vertices with maximum degree at most $\Delta$.  Let $N=180r\Delta^2n$ and $p=1.01\frac{60r\Delta\log(r\Delta)}{N}$.  Set $c_1:=60r\Delta\log(r\Delta)$, $c_2:=10\log(r\Delta)$, $\delta=(\frac{c_2}{5c_1})^{\frac{c_2}{c_2-1}}$, and note that\footnote{After rearranging, this amounts to checking that $e^\frac{\log (30r\Delta)}{10\log (r\Delta)-1}\leq 2$ which can be confirmed when $r\Delta=4$ and it can easily be shown that $e^\frac{\log (30x)}{10\log x-1}$ is decreasing for all $x\geq 4$.} $60r\Delta\left(\frac{1}{30r\Delta}\right)^{1+\frac{1}{10\log (r\Delta)-1}}=2\left(\frac{1}{30r\Delta}\right)^{\frac{1}{10\log (r\Delta)-1}}\geq 1$ for all $r\Delta\geq 4$. So we have
\begin{equation}\label{eq:delta}
\delta N=180r\Delta^2 n\left(\frac{1}{30r\Delta}\right)^{1+\frac{1}{10\log (r\Delta)-1}}\geq 3\Delta n \geq (2\Delta+2)n.
\end{equation}

Note that since $n_0$ is sufficiently large we have that w.h.p.~$G=G(N,p)$ has at most $1.01p\frac{N^2}{2}\leq 1.01^2 \cdot 5400 r^2 \Delta^3 \log(r\Delta) n \leq 5600 r^2 \Delta^3 \log(r\Delta) n$ edges and average degree at least $\frac{1}{1.01}pN=60r\Delta\log(r\Delta)$.  Furthermore by \eqref{eq:delta} and Proposition \ref{prop:K1} and the fact that $n_0$ is sufficiently large we have that w.h.p.~$G=G(N,p)$ has the property that every set $X\subseteq V(G)$ with $|X|\leq (2\Delta+2)n$ spans fewer than $10\log(r\Delta)|X|$ edges.  Consider an $r$-coloring of the edges of $G$ and let $G_1$ be the subgraph induced by the edges in the majority color class (so that $e(G_1)\geq \frac{1}{r}e(G)$).  Since $\Delta \geq 2$, we have $\Delta+1 \leq \frac{3}{2}\Delta$, which implies $c_1 = 6r\Delta c_2 \geq 4r(\Delta+1)c_2$. Thus by Proposition \ref{prop:K2} (with $d=c_1$ and $d'=c_2$), we have that $G_1$ contains a subgraph $G_1'$ with the property that every set $X\subseteq V(G_1')$ with $|X|\leq 2n$ satisfies $|N_{G_1'}(X)\setminus X|\geq \Delta|X|$.  Thus by Theorem \ref{thm:FP}, $G_1'$ contains a copy of $T$.
\end{proof}

For trees $T$ of maximum degree $\Delta$ it would be very interesting to determine the exact role of $\Delta$ in the upper bound on $\widehat{R}_r(T)$.  Is it possible that $\widehat{R}_r(T)=O(\Delta r^2(\log r) n)$?  If so, then can the result of Beke, Li, and Sahasrabudhe \cite{BLS} be generalized to show $\widehat{R}_r(T)=\Theta(\Delta r^2(\log r) n)$? 
\end{document}